\numberwithin{equation}{section}
\newtheorem{theorem}{Theorem}
\newtheorem{lemma}[theorem]{Lemma}
\theoremstyle{definition}
\newtheorem{definition}[theorem]{Definition}
\theoremstyle{remark}
\newtheorem{remark}[theorem]{Remark}
\newtheorem{remarks}[theorem]{Remarks}
\newcommand\N{{\mathbb N}}
\newcommand\R{{\mathbb R}}
\newcommand{\I}{\text{Id}}
\newcommand{\U}{\text{{\bf 1}}}
\newcommand{\V}{\text{{\bf 0}}}
\newcommand\ds{{\frac{\mathrm d}{\mathrm ds}}}
\newcommand{\dd}{{\, \mathrm d}}
\newcommand{\var}{\varepsilon}
\newcommand{\Id}{\hbox{Id}}
\let\oldmarginpar\marginpar
\renewcommand\marginpar[1]{\-\oldmarginpar[\raggedleft\footnotesize #1]%
{\raggedright\footnotesize #1}}
\def \ell {{\mathit{l}}}
\newcommand{\OO}{\mathbb{O}}
\def \s {{\sigma}}
\def\p{\partial}
\newcommand{\sB}{{\mathsf B}}
\newcommand{\cA}{{\mathcal A}}
\newcommand{\cB}{{\mathcal B}}
\newcommand{\cI}{{\mathcal I}}
\newcommand{\cJ}{{\mathcal J}}
\newcommand{\cT}{{\mathcal T}}
\newcommand{\cO}{{\mathcal O}}
\newcommand{\sW}{{\mathsf W}}
\newcommand{\sT}{{\mathsf T}}
\renewcommand{\vec}[1]{\mathbf{#1}}
\title[Hypoelliptic Poincaré inequality and De Giorgi
methods]{Poincaré inequality and Quantitative De Giorgi method
  for hypoelliptic operators}
\author[F. Anceschi]{Francesca Anceschi}
\address[Francesca Anceschi]{Università Politecnica delle Marche:
  Dipartimento di Ingegneria Industriale e Scienze Matematiche,
  Via Brecce Bianche 12 60131 Ancona, Italy}
\email{f.anceschi@staff.univpm.it}
\author[H. Dietert]{Helge Dietert}
\address[Helge Dietert]{Université Paris Cité and Sorbonne
  Université, CNRS, IMJ-PRG, F-75006 Paris, France}
\email{helge.dietert@imj-prg.fr}
\author[J. Guerand]{Jessica Guerand}
\address[Jessica Guerand]{Université de Montpellier, 499-554 Rue
  du Truel, 34090 Montpellier, France}
\email{jessica.guerand@umontpellier.fr}
\author[A. Loher]{Amélie Loher}
\address[Amélie Loher]{University of Cambridge, Wilberforce Road,
  Cambridge CB3 0WA, United Kingdom} \email{ajl221@cam.ac.uk}
\author[C. Mouhot]{Cl\'ement Mouhot}
\address[Cl\'ement Mouhot]{University of Cambridge, Wilberforce
  Road, Cambridge CB3 0WA, United Kingdom}
\email{c.mouhot@dpmms.cam.ac.uk}
\author[A. Rebucci]{Annalaura Rebucci}
\address[Annalaura Rebucci]{Max Planck Institute for Mathematics
  in the Sciences, Inselstraße 22, 04103 Leipzig, Germany}
\email{annalaura.rebucci@mis.mpg.de}
\thanks{AL is supported by the Cambridge Trust.}
\date{July 15, 2025}
\begin{document}

\begin{abstract}
  We propose a systematic approach based on
  trajectories to prove a Poincaré inequality for weak non-negative sub-solutions to hypoelliptic
  equations with an arbitrary number of Hörmander commutators,
  both in the local and in the non-local case. 
  As a consequence, we deduce the weak Harnack inequality and H\"older regularity along the line of the De Giorgi method. 
\end{abstract}

\maketitle

\tableofcontents

\section{Introduction}
\label{sec:intro}

\subsection{The question at hand}

This paper is devoted to the regularity theory of hypoelliptic
equations with rough coefficients. We consider equations with an
arbitrary number of commutators in the sense of Hörmander~\cite{MR0222474}, 
with a diffusive part that is either local, in which case the equation is of second order, 
or non-local, in which case we deal with an equation of fractional order. In either case though, 
the diffusion is \textit{degenerate}, as it acts only along the direction of one vector field.
Our aim is to recover regularity in all directions, by transferring regularity from the degenerate diffusivity onto 
all vector fields.

Concretely, we consider an unknown function $f=f(x,t)$ solving an equation of the form
\begin{equation}
  \label{eq:hormander}
  \partial_t f + \cB f + \cA^* \cA f =0
\end{equation}
where $\cB$ is a first-order differential operator and
$\cA$ is a, possibly fractional, differential operator. 
Here $t \in \R$  and $x=(x^{(\kappa)},\dots,x^{(1)},x^{(0)})$ 
with each
$x^{(i)} \in \R^{d_i}$, $d_i \in \N^*$ and
$i \in \{0,\dots,\kappa\}$ such that
$N:=d_0+\cdots+d_\kappa$. The differential operator $\cA$ acts merely in one direction of $x$, namely $x^{(0)}$,
which makes up the diffusive part of the equation.
Even though we assume \textit{no regularity} on the coefficients of $\cA$, we still expect the diffusivity that stems from $\cA$ to regularise the
solution $f$ of \eqref{eq:hormander} at least in the variable $x^{(0)}$. To recover a regularisation in all remaining directions of $x$, 
we require a so-called \textit{Hörmander's commutator condition}: the constant coefficient vector fields, and their commutators 
(defined as $[X, Y] = XY - YX$, where $X$ and $Y$ are any two smooth vector fields) appearing in \eqref{eq:hormander} are supposed 
to span the whole space. Any equation with this property is called \textit{hypoelliptic}. In our notation, $\kappa$ denotes the number of commutators. 

The aim of the current paper is to derive a Poincaré inequality for non-negative weak sub-solutions to \eqref{eq:hormander}, 
which exploits Hörmander's hypoellipticity to deal with the degeneracy in the diffusive part:
\begin{equation}\label{eq:aim-ineq}
  \int_{Q_+} \left( f - \frac{1}{|Q_-|} \int_{Q_-} f
    \right)_+ \lesssim \int_\Omega |\cA f|,
\end{equation}
where $Q^-,Q^+$ are two disjoint cylinders in the open domain
$\Omega \subseteq \mathbb{R}^{N+1}$ so that $Q^+$ is a time-translation of $Q^-$ into the
future. The symbol $\lesssim$ denotes that the inequality holds up to a universal constant $C$. 
The striking feature of \eqref{eq:aim-ineq} is that on the right hand side there appears merely a differential in the diffusive direction: it is crucial that no differential in other directions appears in order to deduce the weak Harnack inequality or Hölder regularity of solutions to \eqref{eq:hormander}.

In this paper we prove a Poincaré inequality with a
quantitative control on the constant hidden in the symbol $\lesssim$ in \eqref{eq:aim-ineq} for a large class of
equations of type~\eqref{eq:hormander}. 
The Poincaré inequality is a
powerful integral way to measure the oscillation of the
sub-solution, and its use in this context goes back
to~\cite{nash,moser}. We use it to quantitatively to deduce the De Giorgi
lemmas (and thus the Hölder regularity for weak solutions), and the (weak)
Harnack inequality for super-solutions following the methodology
recently developed
in~\cite{guerand-mouhot,loher-2022-preprint-quantitative-de-giorgi}. The main novelty is
the method itself, but the result is
also new when $\mathcal A$ is a fractional differential operator in the case that more than one commutator is
involved (i.e. $\kappa > 1$).

The proof is based on a systematic construction of trajectories that encode the flow of the vector fields underlying \eqref{eq:hormander}, in such a way that they relate the future cylinder $Q^+$ to any point in the past cylinder $Q^-$. 
The use of trajectories is inspired from \cite{guerand-mouhot}, where they consider the local case (second order equation) with one commutator $\kappa = 1$. Their trajectories were combining the vector fields of the equation in a piecewise affine way; yet this was sub-optimal and they had an error term on the right hand side of \eqref{eq:aim-ineq}. This approach was improved by L. Niebel and R. Zacher in \cite{niebel2022kinetic}, where they construct an ad-hoc smooth curve instead of a piecewise affine curve; however this was not sufficient to get rid of the error term that also appeared in \cite{guerand-mouhot}. Here we introduce an approach which picks up on ideas of both these papers: we construct a \textit{smooth} trajectory as the solution of a control problem associated to the equation \eqref{eq:hormander}. We end up with a clean Poincaré inequality in the form \eqref{eq:aim-ineq} without any error terms on the right hand side.

\subsection{Examples of equations}

When $\kappa=1$, the prototypical local equation is the so-called
\emph{Kolmogorov equation}
\begin{equation}
  \label{eq:kolmo}
  \partial_t f + v \cdot \nabla_x f = \nabla_v \cdot \left[ A
    \nabla_v f \right]
\end{equation}
on the unknown $f=f(x,v,t)$ depending on $x,v \in \R^d$ with
$d \in \N$, $t \in \R$, where the matrix $A=A(x,v,t)$ is
measurable symmetric so that, for some $\Lambda \ge 1$,
\begin{equation}
  \label{eq:ellip-1}
  \forall \, (x,v,t) \in \R^{2d+1}, \quad \Lambda^{-1} \le
  A(x,v,t) \le \Lambda.
\end{equation}

When $\kappa \ge 2$, the underlying geometry becomes more involuted, and 
the prototypical local equation is
\begin{equation}
  \label{eq:kolmo-k}
  \partial_t f + x^{(\kappa-1)} \cdot \nabla_{x^{(\kappa)}} f
  + x^{(\kappa-2)} \cdot \nabla_{x^{(\kappa-1)}} f +
  \dots + v \cdot \nabla_{x^{(1)}} f =
  \nabla_v \cdot \left[ A \nabla_v f \right]
\end{equation}
on the unknown
$f=f(x^{(\kappa)},x^{(\kappa-1)}, \dots, x^{(1)},v,t)$, depending
on $x^{(\kappa)},x^{(\kappa-1)}, \dots, x^{(1)},v \in \R^d$,
$t \in \R$, where the matrix $A$ is measurable, symmetric and so
that, for some $\Lambda \ge 1$,
\begin{equation} \label{eq:ellip-k}
\begin{aligned}
  \forall \, (x^{(\kappa)},x^{(\kappa-1)}, \dots, x^{(1)},v,t)
 & \in \R^{(\kappa+1)d+1}, \\
   &\Lambda^{-1} \le
  A(x^{(\kappa)},x^{(\kappa-1)},\dots,x^{(1)},v,t) \le \Lambda.
 \end{aligned}
\end{equation}
From now on, we denote $v = x^{(0)}$ for notational consistency.

In the fractional (non-local) case with $\kappa=1$
commutators, the prototypical equation is the following
fractional Kolmogorov equation
\begin{equation}
  \label{eq:kolmo-nonloc}
  \partial_t f + v \cdot \nabla_x f = \left( - \Delta_v
  \right)^{\frac{\beta}{2}} \cdot \left[ a
    \left( - \Delta_v \right)^{\frac{\beta}{2}} f \right]
\end{equation}
on the unknown $f=f(x,v,t)$ depending on $x,v \in \R^d$ and
$t \in \R$, with $\beta \in (0,1)$ and a scalar function $a$ that
is measurable and so that, for some $\Lambda \ge 1$,
\begin{equation}
  \label{eq:ellip1-frac}
  a(x,v,t) \in [\Lambda^{-1},\Lambda].
\end{equation}
One should think about a divergence form equation of fractional order $2\beta \in (0, 2)$ with rough diffusion coefficients. The fractional diffusion in velocity is the generator of a Lévy process. Under more general assumptions on the roughness of the coefficient $a$, this equation is a reformulation of the Boltzmann equation (without cutoff) in a specific conditional regime, up to a lower order source term. We will not specify this direction further in this work, since our focus is the abstraction and the development of a method that is robust enough to treat a wide class of operators that we believe can be generalised to specific models as needed. The robustness of the method, however, is best demonstrated within a class of equations that can be described under a notation that we hope to be readable. 

When the number of commutators $\kappa \ge 2$, the prototypical
non-local equation is
\begin{equation}
  \label{eq:kolmo-nonloc-k}
 \partial_t f + x^{(\kappa-1)} \cdot \nabla_{x^{(\kappa)}} f
  + x^{(\kappa-2)} \cdot \nabla_{x^{(\kappa-1)}} f +
  \dots + v \cdot \nabla_{x^{(1)}} f =
  \left( - \Delta_v \right)^{\frac{\beta}{2}} \cdot \left[ a
    \left( - \Delta_v \right)^{\frac{\beta}{2}} f \right]
\end{equation}
on the unknown $f$ which depends on
$x^{(\kappa)},x^{(\kappa-1)}, \dots, x^{(1)},v \in \R^d$,
$t \in \R$, and where the function $a$ is measurable and so that,
for some $\Lambda \ge 1$,
\begin{equation}
\begin{aligned}
  \label{eq:ellipk}
  \forall \, (x^{(\kappa)},x^{(\kappa-1)}, \dots, x^{(1)},v,t)
  &\in \R^{(\kappa+1)d+1}, \\
   &\Lambda^{-1} \le
  a(x^{(\kappa)},x^{(\kappa-1)},\dots,x^{(1)},v,t) \le \Lambda.
  \end{aligned}
\end{equation}

It is more common for non-local equations to phrase the operator $\left( - \Delta_v \right)^{\frac{\beta}{2}} \cdot \left[ a
    \left( - \Delta_v \right)^{\frac{\beta}{2}} f \right]$ in \eqref{eq:kolmo-nonloc} as 
\begin{align*}
  & \mathcal L f\left(x^{(\kappa)},\dots,x^{(1)},v,t\right) \\
  & =  \int_{\R^d} \left[ f\left(x^{(\kappa)},\dots,
    x^{(1)},w,t\right) -f\left(x^{(\kappa)},\dots,x^{(1)},v,t
    \right) \right]
    K\left(x^{(\kappa)},\dots,x^{(1)},v,w,t\right) \dd w,
\end{align*}
where the kernel $K$ satisfies pointwise bounds
\begin{multline}
  \forall \, \left(x^{(\kappa)},\dots,x^{(1)},v,w,t\right)
  \in \R^{(\kappa+2)d+1}, \\
  \label{eq:elliptic-nonloc}
  \Lambda^{-1} \big\vert v - w\big\vert^{-(d+2\beta)} \leq
    K\left(x^{(\kappa)},\dots,x^{(1)},v,w,t\right) \leq \Lambda
    \big\vert v - w\big\vert^{-(d+2\beta)},
\end{multline}
and the symmetry
\begin{multline}
  \forall \, \left(x^{(\kappa)},\dots,x^{(1)},v,w,t\right)
    \in \R^{(\kappa+2)d+1},\\
  \label{eq:symmetry-nonloc}
  K\left(x^{(\kappa)},\dots,x^{(1)},v,w,t\right) =
  K\left(x^{(\kappa)},\dots,x^{(1)},w,v,t\right).
\end{multline}
Our results still apply for this reformulation.

\subsection{General assumptions (H)}
We denote the variable $z:=(x,t) \in \R^{N+1}$ with $t \in \R$ and
$x=(x^{(\kappa)},\dots,x^{(1)},v)$ with each
$x^{(i)} \in \R^{d_i}$, $d_i \in \N^*$ and
$i \in \{1,\dots,\kappa\}$, and $v \in  \R^{d_0}$. We assume that
$d_0 \geq d_1 \geq \dots \geq d_{\kappa}\geq 1$ and denote
$N:=d_0+\cdots+d_\kappa$.

We consider~\eqref{eq:hormander} with operators $\cA$ and $\cB$
as follows. The operator $\cB$ is given by
\begin{equation*}
  \cB :=  (\sB x) \cdot \nabla_x,
\end{equation*}
where the $N \times N$ matrix $\sB$ has the form ($\OO$ denotes a
block matrix of zeros)
\begin{equation} \label{B_0}
  \sB
  :=
  \begin{pmatrix}
    \OO &  \sB_\kappa & \OO & \cdots & \cdots & \OO \\
    \vdots & \OO &  \sB_{\kappa-1} & \OO & \cdots & \OO\\
    \vdots & \vdots  & \OO & \ddots & \OO & \OO \\
    \vdots & \vdots  & \vdots & \ddots & \sB_2 & \OO \\
    \vdots & \vdots & \vdots & \vdots & \OO & \sB_1 \\
    \OO & \OO & \cdots & \cdots & \cdots & \OO
  \end{pmatrix} ,
\end{equation}
with $d_{i} \times d_{i-1}$ blocks $\sB_i$ which are constant
matrices of rank $d_i$ with $|\sB|\leq \Lambda$.

The operator $\cA$ is a linear operator acting only on the
variable $v$ and $\beta$-homogeneous in this variable with
$\beta \in (0,1]$ and it satisfies the following pointwise
control from above and integral control from below, given
$\varphi \in C^\infty_c(\R^N)$ such that $\| \varphi \|_{L^\infty}\geq 1$:
\begin{equation}
  \label{eq:hypA}
  \| \cA \varphi \|_\infty \lesssim \| \nabla_{v} \varphi
  \|_{L^\infty} ^{\beta+\epsilon} \| \varphi \|_{L^\infty} ^{1-\beta-\epsilon}, \qquad
  \int \left| \cA \varphi \right| \dd z \gtrsim
  \int \left|  (-\Delta_v)^{\beta/2} \varphi \right|
  \dd z,
\end{equation}
where $\epsilon = 0$ for $\beta = 1$, and $\epsilon \in (0, 1-\beta)$ for $\beta \in (0, 1)$.
We point out that in case of $\beta = 1$, we identify with a slight abuse of notation $(-\Delta_{v})^{\frac{1}{2}}$ with $\nabla_{v}$, such that we are \textit{not} violating the incorrect inequality $\int \left| \nabla \varphi\right| \dd z \gtrsim  \int \left|  (-\Delta_v)^{1/2} \varphi \right| \dd z$. In fact, in the local case where $\beta = 1$, there is no need to introduce fractional derivatives at all, but in order to treat $\beta \in (0, 1]$ we do so to ease the notation. 


The transport operator $\mathcal T := \partial_t + \cB$
naturally defines the non-commutative group of
\emph{hyper-Galilean transformations}, that generalises the
Galilean transformations:
\begin{equation}
  \label{grouplaw}
  (\tilde x,\tilde t) \circ (x, t) = (x + \exp(t \sB) \tilde x,
  t+ \tilde t ),
  \hspace{5mm} (x,t), \ (\tilde x, \tilde t) \in \R^{N+1}.
\end{equation}
This transformation leaves the operator $\mathcal T$ invariant:
$(\mathcal T f)(\tilde z \circ z) = \mathcal T[f(\tilde z \circ
\cdot)](z)$.
When considering the principal part operator associated to \eqref{eq:hormander}, which is obtained by replacing $A$ in \eqref{eq:kolmo-k} with the identity matrix of dimension $d$ in the local case or by choosing $a$ in \eqref{eq:kolmo-nonloc-k} as the identity in the non-local case, the related equation naturally
defines scaling properties encoded by the following \emph{hypoelliptic dilations}, defined for $r >0$
\begin{equation}
  \label{e-dilations}
  \delta_r :=\textrm{diag}\left(r^{1+2\kappa \beta}
  \mathbb{I}_{d_\kappa}, \dots,r^{1+
    2\beta}\mathbb{I}_{d_1},r \mathbb{I}_{d_0},r^{2\beta}\right),
\end{equation}
and we have the relation
\begin{equation*}
  (\cT +  \cA^* \cA)[f\circ \delta_r] = r^{2\beta}
  \big[\left(\cT + \cA^* \cA  \right)f\big] \circ \delta_r.
\end{equation*}

These transformations allow to define the \emph{hypoelliptic
  cylinders}
\begin{eqnarray*}
  \label{rcylind}
  Q_r(\tilde z):=\tilde z
  \circ\left(\delta_r\left(Q_1\right)\right) \quad \text{ with }
  \quad Q_1:=B_1 \times \ldots \times
  B_1 \times (-1,0] \subset \R^{N+1}.
\end{eqnarray*}

The (local) Kolmogorov equation~\eqref{eq:kolmo} corresponds to
$\kappa=1$, $d_0=d_1=d$,
$(x^{(1)},x^{(0)})=(x,v) \in \R^d \times \R^d$,
$\cA:=\sqrt{A} \nabla_v $ with a $d \times d$-matrix
$A=A(x,v,t)$ measurable symmetric so that
$A \in [\Lambda^{-1},\Lambda]$, and the matrix
\begin{equation}
  \label{eq:matrixB1}
  \sB := \begin{pmatrix}
    \OO & \mathbb{I}_d \\
    \OO & \OO
  \end{pmatrix}.
\end{equation}

The (local) higher-order Kolmogorov equation~\eqref{eq:kolmo-k}
corresponds to $d_i=d$ for all $i$,
$(x^{(\kappa)},\dots,x^{(1)},v) \in \R^{(\kappa+1)d}$,
$\cA:=\sqrt{A} \nabla_{v} $ with a $d \times d$-matrix
$A=A(x^{(\kappa)},\dots,x^{(1)},v,t)$ measurable symmetric
so that $A \in [\Lambda^{-1},\Lambda]$, and the matrix
\begin{equation*}
  \sB := \begin{pmatrix}
    \OO &  \mathbb{I}_d & \OO & \ldots  & \OO \\
   \OO & \OO & \mathbb{I}_d & \OO& \vdots  \\
    \OO & \vdots & \ddots  & \ddots  & \OO \\
    \vdots & \vdots  & \vdots  & \ddots & \mathbb{I}_d  \\
    \OO & \OO & \ldots & \ldots & \OO
  \end{pmatrix}.
\end{equation*}

The fractional Kolmogorov equations~\eqref{eq:kolmo-nonloc}
and~\eqref{eq:kolmo-nonloc-k} are obtained along the same line
with $\cA:= \sqrt{a} (-\Delta_v)^{\beta/2}$ and the same
matrices $\sB$.

\subsection{Main result}
\label{sec:aim}
Let us first give the definitions of weak sub-solutions and sub/super-solutions used in the results.
\begin{definition}
\label{def_weak_sol}
Let $\Omega=\Omega_x\times \Omega_t=\Omega_{(N-1)}\times \cdots \times \Omega_{(1)}\times \Omega_{(0)}\times \Omega_t$ be an open subset of $\mathbb{R}^{N+1}$.
A function $f$ is a \emph{locally integrable weak sub-solution} of \eqref{eq:hormander} if $f$ and $\cA f$ are in $L^{1}_{loc}(\Omega)$ and satisfies
\begin{align*}
\partial_t f + \cB f + \cA^* \cA f \leq 0,
\end{align*}
in the distributional sense for non-negative test functions $\varphi \in C^{\infty}_{c}(\Omega)$.
A function $f$ is a \emph{sub-solution} of \eqref{eq:hormander} if $f$ is a weak sub-solution in $L^\infty(\Omega_{t};L^2(\Omega_x))\cap L^2(\Omega_{(N-1)}\times \cdots \times \Omega_{(1)}\times \Omega_t; H^{\beta}(\Omega_{(0)}))$ and satisfies 
\begin{equation}\label{eq:weak-subsol}
\partial_t f + \cB f\in L^2(\Omega_{(N-1)}\times \cdots \times \Omega_{(1)}\times \Omega_t; H^{-\beta}(\Omega_{(0)})).
\end{equation}
A function $f$ is a \emph{super-solution} of \eqref{eq:hormander} if $-f$ is a \emph{sub-solution}.
\end{definition}
\begin{remark}
The condition \eqref{eq:weak-subsol} is used in \cite{GIMV} and it is weaker than the condition $$\partial_t f + \cB f\in L^{2}(\Omega),$$ used in \cite{PP}. In the definition of sub-solution, one could also consider a third condition, only valid for the local case, that reads as follows:
$$\forall G:\mathbb{R}\rightarrow \mathbb{R} \textrm{ in } C^2\textrm{ with } G'\geq 0, G''\geq 0 \textrm{ both bounded, then } G(f) \textrm{ is a weak sub-solution}.$$
This condition was introduced in \cite{guerand-mouhot} and is weaker than the other two listed in the definition above. Indeed, it allows for instance to consider $f=f(t)=\mathbf{1}_{t\leq 0}$ as a sub-solution which explains the fact that the cylinders are disjoint in the following theorems. 
\end{remark}

Our first result is the Poincaré inequality:
\begin{theorem}
  \label{thm:poincare}
  Consider operators $\cA, \cB$ satisfying the assumptions {\bf
    (H)}. Let $R >0$ be sufficiently large, depending on the
  number of commutators $\kappa$ and $\Lambda$. Let  \(Q_R :=\delta_R\left(Q_1\right)\)
  and suppose $f$ is a weak non-negative sub-solution to
  \eqref{eq:hormander} in $\tilde \Omega$ where $\tilde \Omega = Q_R$  if $\beta = 1$, that is in the local case, and  \(\tilde \Omega = B_R\times \dots \times B_R^{1 + 2(\kappa-1)\beta} \times  \R^{d_0}  \times (-5,0)\), if $\beta \in (0, 1)$, that is in the non-local case. Then
  \begin{equation}
    \label{eq:poincare}
    \int_{Q^+} \left( f - \frac{1}{|Q^-|} \int_{Q^-}
    f \right)_+  \dd z \lesssim
    \int_{\tilde \Omega} \big\vert \cA f\big\vert \dd z,
  \end{equation}
  where $Q^+ := Q_1 \subset \tilde \Omega$ denotes the future cylinder,
  and
  $Q^-=B_1 \times \ldots \times B_1 \times (-5,-4] \subset
  \tilde\Omega$ denotes the past cylinder (see
  Figure~\ref{fig:main-result}). 
  The constant is universal and
  depends on $\kappa$, $\beta$, $d_0,\dots,d_\kappa$, $\sB$, $R$
  and $\Lambda$.
\end{theorem}

\begin{figure}
  \centering

  \begin{tikzpicture}[scale =1]
    \draw [black] (-3,0) -- (3,0);
    \draw[black] (-3,-5) -- (3,-5);
    \draw [black](-3,0) -- (-3,-5);
    \draw [black](3,0) -- (3,-5) node[anchor=north, scale=1]
    {$\tilde \Omega$};
    \draw [teal](-1,0) -- (1, 0);
    \draw [teal](-1,-2) -- (1, -2);
    \draw [teal](-1,0) -- (-1, -2);
    \draw [teal](1,0) -- node[anchor=west, scale=1]
    {$Q^+$}(1, -2);
    \draw[violet] (-1,-3) -- (1, -3);
    \draw [violet](-1,-5) -- (1, -5);
    \draw [violet](-1,-3) -- (-1, -5);
    \draw [violet](1,-3)  -- node[anchor=west, scale=1]
    {$Q^-$} (1, -5);
  \end{tikzpicture}

  \caption{The different cylinders in the statement in Poincaré
    inequality (the time variable is represented vertically and
    upward).}\label{fig:main-result}
\end{figure}
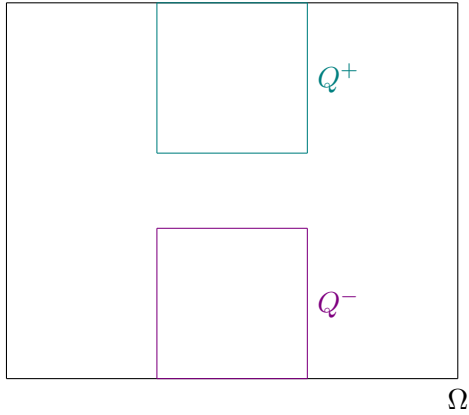

\begin{remarks}
  \begin{enumerate}
  \item The radius $R$ of the ambient space $\tilde \Omega$ is large enough
    such that the trajectories constructed in the proof (see
    \eqref{eq:control-solved}) do not exit $\tilde \Omega$. Since all maps
     involved in the construction of such trajectories are affine, we know that $R < +\infty$. One could try to
    optimise the size of the ambient space by optimising the
    control function used in the construction of the
    trajectories.

  \item It is easy to include a source term $S$ in the right hand
    side of~\eqref{eq:hormander}. Then the $L^1$ norm of $S$ on
    $\tilde \Omega$ should appear on the right hand side
    of~\eqref{eq:poincare}.

  \item It is likely that the approach can be extended to the
    case where the matrix $\sB$ has non-zero terms above the
    sub-diagonal, since they do not break the commutator
    structure. This, however, would require additional techniques,
    which we leave for further work.

  \item Under the same assumptions, it is straightforward to
    prove $L^p$ versions of the Poincaré
    inequality~\eqref{eq:poincare} by the same method, for
    $p \in (1, + \infty)$:
    \begin{equation*}
      \int_{Q^+} \big(f - \langle
      f\rangle_{Q^-}\big)_+^p \dd z \lesssim
      \int_{\tilde \Omega} \big\vert \cA f\big\vert^p \dd z. 
    \end{equation*}
   Furthermore, it is even possible to adapt our proof to 
    obtain a gain of integrability for $f$, see the
    related comments in Remark~\ref{rmk_gain_int}.

  \item The left hand side of~\eqref{eq:poincare} could be
    mollified as follows
    \begin{equation}
      \label{eq:moll-Poincare}
      \int_{\tilde \Omega} \left(f - \int_{\tilde \Omega} f \psi_- \right) \psi_+
      \dd z \lesssim_{\psi_\pm} \int_{\tilde \Omega} \big\vert \cA
      f\big\vert\dd z
    \end{equation}
    where $\psi_\pm \in C^\infty_c(Q^\pm)$ are non-negative and
    integrate to one. This amounts to replacing respectively the
    indicator functions
    $\chi_{f(\cdot)\geq \langle f \rangle_{Q^-}} \chi_{Q^+}$ and
    $\chi_{Q^-}$ by some mollified versions $\psi_+$ and $\psi_-$
    in the proof.

  \item In such a mollified formulation, the Poincaré inequality
    in the local case would follow from the following
    \textit{``wholespace hypoelliptic Bogovoskiǐ result''} (in the spirit
    of~\cite{MR553920}): given $g=\psi_+-\psi_-$ with $\psi_\pm$
    as above, construct a vector field $\vec{F}:= (F_T,F_{v})$
    with $F_T$ one-dimensional and $F_{v}$ $d_0$-dimensional,
    so that
    \begin{equation*}
      \cT^* F_T - \nabla_{v} \cdot F_{v} = g \quad \text{ and
      } \quad F_T \ge 0 \quad \text{ and } \quad \| \nabla_{v}
      F_T \|_\infty + \| F_{v} \|_\infty \lesssim_g 1.
    \end{equation*}
    This is an open question, but we believe this vector field to
    exist.

  \item It is also possible to prove the local case of our
    Poincaré inequality through a ``\emph{parabolic wholespace
      Bogovoskiǐ inequality}'' in
    $(x^{(\kappa)},\dots,x^{(1)}, t)$ (excluding the $x^{(0)}=v$
    variable, i.e. the velocity in case of a Kolmogorov
    equation), by following the arguments
    in~\cite{dietert2023quantitative}. This parabolic wholespace
    Bogovoskiǐ inequality is an easy generalisation of the
    Bogovoskiǐ inequality, where the explicit representation of
    Bogovoskiǐ is used to show that the first coordinate of the
    vector field solving the divergence problem is controlled
    below, due to the form of $g$ given in the previous point (6).
  \end{enumerate}
\end{remarks}

It is possible to employ such a Poincaré inequality to prove a weak Harnack inequality
for weak super-solutions to equation \eqref{eq:hormander}. We only sketch the proof of this here (see Section \ref{sec:DGlemmas} for further details), since it follows from previous literature. Nevertheless, 
it is one of the main blocks to complete the De Giorgi-Nash-Moser theory for weak solutions, which 
consists in proving local quantitative regularity results (such as the a priori boundedness of weak solutions, the intermediate value lemma and
the Harnack inequality) for weak solutions, and for this reason it is now stated as an independent result.
\begin{theorem}[Weak Harnack Inequality]
  \label{thm:weak_harnack_nonlocal}
  Let the operators $\cA, \cB$ be such that $\cB$ is given by
  {\bf (H)} and $\cA$ is more specifically given by
  \begin{itemize}
  \item[(i)] $\cA:=\sqrt{A} \nabla_{v}$ with
    $A \in [\Lambda^{-1},\Lambda]$ measurable symmetric matrix (local
    case),
  \item[(ii)] $\cA = \sqrt{a} (-\Delta_v)^{\beta/2}$ with
    $a \in [\Lambda^{-1},\Lambda]$ scalar measurable (non-local
    case).
  \end{itemize}
  Let $f$ be a super-solution to~\eqref{eq:hormander} on $Q_1$, such that in the local case, $0 \leq f $ in $Q_1$, and such that in the non-local case, 
  $0 \le f \le 1$ on $B_1^{\otimes \kappa} \ \times \R^{d_0} \times [-1, 0]$. Then there is $C > 0$
  and $\zeta > 0$ depending on $\beta, N, \kappa, \Lambda$ such
  that for $r_0 > 0$ sufficiently small the Weak Harnack
  inequality is satisfied:
  \begin{equation}
    \Bigg(\int_{\tilde Q^-_{r_0/2}} f^\zeta(z) \dd
    z\Bigg)^{\frac{1}{\zeta}} \leq C \inf_{Q_{r_0/2}} f,
  \end{equation}
  where
  $\tilde Q_{\frac{r_0}{2}}^- := Q_{\frac{r_0}{2}}\big(0, \dots,
  0, -\frac{5}{2} r_0^{2\beta} +\frac{1}{2}
  \big(\frac{r_0}{2}\big)^{2\beta}\big)\in \R^{N+1}$ (the local
  case is $\beta=1$).
\end{theorem}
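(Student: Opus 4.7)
The plan is to apply the quantitative De Giorgi iteration developed in \cite{guerand-mouhot} for the local case with one commutator and extended to the non-local case in \cite{loher-2022-preprint-quantitative-de-giorgi}, now using Theorem~\ref{thm:poincare} as the only non-trivial input and the ingredient that enables an arbitrary number of commutators. Throughout, the hyper-Galilean group law~\eqref{grouplaw} and the hypoelliptic dilations~\eqref{e-dilations} play the role of Euclidean translations and parabolic scaling: every step is initially proved on the reference pair $Q^+,Q^-$ and then shifted and rescaled to cover arbitrary hypoelliptic sub-cylinders of $Q_1$ (or, in the non-local case, of $\R^{d_0}\times B_1^{\otimes\kappa}\times[-1,0]$).

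One first establishes a Caccioppoli-type energy estimate for the truncations $(f-k)_-$ of a non-negative super-solution, by testing the equation against appropriate hypoelliptic cut-offs and using the structural bounds~\eqref{eq:hypA}. Combined with the $L^p$ version of the Poincaré inequality (point (4) of the Remarks after Theorem~\ref{thm:poincare}), this yields a first De Giorgi lemma for super-solutions (``expansion of positivity''): there exists $\nu_0>0$, depending only on $\beta,N,\kappa,\Lambda$, such that any super-solution $0\le f\le 1$ satisfying $|\{f\ge 1/2\}\cap Q^-|\ge (1-\nu_0)|Q^-|$ obeys $f\ge 1/4$ on $Q^+$. The decisive step is then an intermediate value lemma: for any $\mu>0$ there exists $\delta(\mu)>0$ such that if $0\le f\le 1$ is a super-solution with $|\{f\ge 1/2\}\cap Q^-|\ge\mu$ and $|\{f\le 1/4\}\cap Q^+|\ge\mu$, then $|\{1/4<f<1/2\}\cap\tilde\Omega|\ge\delta$. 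This follows from Theorem~\ref{thm:poincare} applied to the truncated sub-solution $g:=\min\{1,4(1/2-f)_+\}$: the two measure assumptions force a uniform lower bound on the left-hand side of~\eqref{eq:poincare}, while the right-hand side is supported on the intermediate strip $\{1/4<f<1/2\}$ and hence vanishes with its measure after a Cauchy--Schwarz against the Caccioppoli estimate. This is the single place where the Poincaré inequality for arbitrary $\kappa$ is essential.

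The remainder is now routine iteration. Stacking shifted copies of $Q^\pm$ inside $\tilde Q^-_{r_0/2}$ via the transformations~\eqref{grouplaw}--\eqref{e-dilations} and iterating the two lemmas on the geometric sequence of thresholds $2^{-k}$ produces geometric decay of the distribution function $k\mapsto\big|\{f\le 2^{-k}\inf_{Q_{r_0/2}} f\}\cap\tilde Q^-_{r_0/2}\big|$, i.e.\ exponential integrability of $(-\log f)_+$ on $\tilde Q^-_{r_0/2}$, and Chebyshev's inequality then yields the stated weak Harnack bound with $\zeta,C>0$ depending only on $\beta,N,\kappa,\Lambda$. In the non-local case every step additionally carries a tail contribution from the non-locality of $\cA^*\cA$, absorbed using the global hypothesis $0\le f\le 1$ on $\R^{d_0}\times B_1^{\otimes\kappa}\times[-1,0]$, exactly as in \cite{loher-2022-preprint-quantitative-de-giorgi}. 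The main obstacle is the intermediate value lemma, which is precisely what Theorem~\ref{thm:poincare} is designed to furnish; once it is available for arbitrary $\kappa$, the weak Harnack inequality follows with no further conceptual difficulty, the higher-commutator geometry entering only through the explicit form of the hyper-Galilean translations and dilations.
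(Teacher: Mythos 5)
Your overall architecture matches the paper's: reduce the weak Harnack inequality to a first De Giorgi lemma plus an intermediate value (second De Giorgi) lemma, obtain the latter from Theorem~\ref{thm:poincare} applied to suitable truncations, and then run the measure-decay/stacking iteration of \cite{guerand-mouhot} (local) and \cite{loher-2022-preprint-quantitative-de-giorgi} (non-local), with the hyper-Galilean group law~\eqref{grouplaw} and dilations~\eqref{e-dilations} replacing translations and parabolic scaling, and with the tails in the non-local case absorbed by the global bound $0\le f\le 1$. This is exactly how the paper proceeds in Section~\ref{sec:DGlemmas}.

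There is, however, a genuine gap in how you produce the first lemma. You claim that a Caccioppoli estimate for $(f-k)_-$ combined with the $L^p$ Poincaré inequality of Remark (4) yields the measure-to-pointwise statement ``$|\{f\ge 1/2\}\cap Q^-|\ge(1-\nu_0)|Q^-|$ implies $f\ge 1/4$ on $Q^+$''. A Poincaré inequality at the same integrability level cannot close the De Giorgi iteration that upgrades smallness in measure (or in $L^2$) to a pointwise bound: that iteration needs a strictly superlinear recurrence for the truncated energies, i.e.\ a \emph{gain of integrability} (a hypoelliptic Sobolev-type embedding). The paper obtains this first lemma not from the Poincaré inequality at all, but from the energy estimate together with the gain of integrability coming from the fundamental solution of the (local or fractional) Kolmogorov equation, extended to several commutators by scaling or Fourier computation of the fundamental solution; without this ingredient your expansion-of-positivity step does not follow. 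A second, smaller imprecision: as stated, your measure hypothesis lives only on $Q^-$, while the pointwise conclusion on $Q^+$ requires smallness of $(1/2-f)_+$ on a cylinder adapted to $Q^+$; in the standard scheme this is exactly why the intermediate value lemma must be iterated before the first lemma is applied, rather than the first lemma being available as a standalone consequence of Caccioppoli plus Poincaré. Once the first lemma is supplied by the fundamental-solution argument, the rest of your sketch coincides with the paper's proof.
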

\begin{remark}
  \begin{enumerate}
  \item In the local case the strong Harnack follows immediately
    from the weak Harnack inequality above and the gain of
    integrability (first lemma of De Giorgi). By contrast, the
    strong Harnack inequality does not follow as immediately in
    the non-local case, since the function values outside the
    domain affect the solution inside the domain. It is however
    possible to adapt the recent work~\cite{APP, Loher-2023, Loher-2024}, which
    proves the strong Harnack inequality in the non-local case
    with $\kappa = 1$, to our setting.
  \item It is standard to deduce the Hölder regularity from the
    weak Harnack inequality. We do not repeat the argument, but it
    is identical for instance to those
    in~\cite{guerand-mouhot,loher-2022-preprint-quantitative-de-giorgi}.
    \item The boundedness of $f$ in the non-local case almost everywhere in the $x^{(0)}=v$ direction is required to make sense of the equation \eqref{eq:hormander}. 
  \end{enumerate}
\end{remark}

\subsection{Motivation}
\label{sub-motivation}

Kolmogorov equations appear in the theory of stochastic
processes: equation~\eqref{eq:kolmo} was studied by
Kolmogorov in 1934~\cite{kolmogorov}, when $A$ is the identity
matrix, to understand the ``time-integrated Brownian motion''. The
equation is then the Kolmogorov forward equation for the
process
\begin{equation*}
    {\rm d} V_t = {\rm d} W_{t}, \quad
    {\rm d} X_t =  V_{t} \dd t.
\end{equation*}
Kolmogorov obtained the explicit formula for its fundamental solution
in~\cite{kolmogorov} which inspired the seminal
work~\cite{MR0222474}. Integrating twice in time the Brownian motion
would lead to equation~\eqref{eq:kolmo-k} with $\kappa=2$ and $A$
the identity matrix. Due to this connection to stochastic
processes, several mathematical models involving linear and nonlinear Kolmogorov type equations have also appeared in finance, and in particular equations of 
type~\eqref{eq:kolmo-k} appear in various models for pricing of
path-dependent financial instruments. For
example the equation for $P=P(S,A,t)$
\begin{equation}
  \label{bpv}
  \p_t P + \tfrac12 \s^{2} S^2 \p^2_S P + (\log S ) \p_{A} P + r
  (S \p_{S} P - P) = 0, \qquad S > 0,
  \, A, t \in \mathbb{R},
\end{equation}
arises in the Black and Scholes option pricing problem
\begin{equation*}
  \begin{cases}
    {\rm d} S_t = \mu S_{t} \dd t + \s S_{t} \dd W_{t}, \\
    {\rm d} A_t =  S_{t} \dd t,
  \end{cases}
\end{equation*}
where $\s$ is the volatility of the stock price $S$, $\mu$ is the
interest rate of a riskless bond and $P= P(S, A, t)$ is the price
of the Asian option depending on the price of the stock $S$, the
geometric average $A$ of the past price and the time to maturity
$t$.  For a more exhaustive treatment of the applications of
Kolmogorov operators to finance and to stochastic theory, we
refer to the monograph \cite{Pascucci} by Pascucci.

Let us close this paragraph by mentioning that analogously,
in the non-local case when $a = 1$,
or equivalently when $\mathcal A$ corresponds to
the fractional Laplacian, the stochastic
process underlying this fractional diffusion is a Lévy process.

\subsection{Strategy of proof}

The starting point is the method developed
in~\cite{guerand-mouhot} for proving the Poincaré inequality
using trajectories and deducing the second Lemma of De Giorgi and
the weak Harnack inequality.

The strategy to obtain a Poincaré inequality for the local case
when $\kappa =1$ in~\cite{guerand-mouhot} can be summarised as
follows. The first step is to mollify the characteristic function
of $Q^-$, by adding an error term
\begin{align*}
  \int_{Q^+} \left(f-\langle f
  \rangle_{Q^{-}}\right)_+ \dd z \le
  & \int_{z_+ \in Q^+} \left( \int_{z_- \in Q^-} \left[ f(z_+)-
    f(z_-) \right] \varphi_\var(y,w) \dd z_- \right)_+ \dd z_+ \\
  & +  \var^{2d} \|f\|_{L^2(Q_R)}
\end{align*}
with $0 \leq \varphi_\var \leq 1$ smooth only depending on $(y,w)$, $\varepsilon > 0$,
$z_+=(x,v,t) \in Q^+$ and $z_-=(y,w,s) \in Q^-$. The second step
consists in constructing piecewise affine trajectories following
the two vector fields
$\mathcal T := \partial_t + v \cdot \nabla_x$ and $\nabla_v$
to connect any $z_+ \in Q^+$ with any $z_- \in Q^-$:
\begin{align*}
  (x,v,t)  \underset{\nabla_v}{\longrightarrow}
  \left(x,\frac{x-y}{t-s},t\right)
  \underset{\cT}{\longrightarrow} \left(y,\frac{x
  -y}{t-s},s\right) \underset{\nabla_v}{\longrightarrow}
  (y,w,s).
\end{align*}
Note here that a positive time gap between the two cylinders
$Q^-$ and $Q^+$ is used to make sure the intermediate velocity
$(x-y)/(t-s)$ remains bounded. We then write $f(z_+) - f(z_-)$ as
an integral along the chosen trajectory and use
$\cT f + \cA^* \cA f \le 0$:
\begin{align*}
  \int_{Q^-} \left[ f(z_+) - f(z_-)\right]
  \varphi_\var(y,w) \dd z_-
  & = \int_{Q^-} \int_{\text{trajectory}}
    \cT f(\cdots) \varphi_\var(y,w) \dd s \dd z_- \\
  & \quad + \int_{Q^-} \int_{\text{trajectory}} \nabla_v f(\cdots)
    \varphi_\var(y,w) \dd s \dd z_- \\
  & \lesssim \int_{Q^-} \int_{\text{trajectory}}
    \left[ \nabla_v \cdot \left( A \nabla_v f \right) \right](\cdots)
    \varphi_\var(y,w) \dd s \dd z_- \\
  & \quad + \int_{Q^-} \int_{\text{trajectory}} \nabla_v f(\cdots)
    \varphi_\var(y,w) \dd s \dd z_-.
\end{align*}
We then want to integrate by parts the $v$-divergence in the
past variable $z_-$. This integration by parts degenerates near
the future point $z_+$, since $z_+$ does not depend on $z_-$, and
it produces a non-integrable singularity. In order to overcome
this singularity, the paper~\cite{guerand-mouhot} introduced an
additional small fourth sub-trajectory along $\nabla_x$:
\begin{align*}
(x,v,t)
  & \underset{\nabla_x}{\longrightarrow}
    (x+\varepsilon w,v,t) \underset{\nabla_v}{\longrightarrow}
    \left(x+\varepsilon w, \frac{x+\varepsilon w
    -y}{t-s},t \right) \\
  & \underset{\cT}{\longrightarrow} \left(y,\frac{x+\varepsilon w
    -y}{t-s},s\right) \underset{\nabla_v}{\longrightarrow}
    (y,w,s).
\end{align*}

This sub-trajectory ``noises'' the future position variable with
the past velocity. This produces a second error term of the form
$\var^\sigma \| f \|_{L^1_{t,v} W^{\sigma,1}_x}$, which can, however,
 be controlled for a small $\var$ by the
integral regularity $L^1_{t,v} W^{\sigma,1}_{x}$ (which can be
established for sub-solutions to the Kolmogorov equation, for a
small $\sigma>0$). Finally it
yields
\begin{equation*}
  \int_{Q^+} \big(f - \langle
  f\rangle_{Q^-}\big)_+  \dd z \lesssim
  C_\var \int_{Q_R} \big\vert S\big\vert \dd z +
  \int_{Q_R} \big\vert \nabla_v f\big\vert \dd z + \var^\sigma \| f
  \|_{L^1_{t,v} W^{\sigma,1}_x(Q_R)} + \var^{2d} \|f\|_{L^2(Q_R)}
\end{equation*}
for a constant $C_\var >0$ depending on $\var>0$. The two
additional errors weaken the Poincaré inequality, but the latter
is shown to be sufficient for implementing the next steps of the
De Giorgi theory in~\cite{guerand-mouhot}.

In the present article, we improve the method by simplifying the
proof and removing the two error terms in the inequality.
Moreover, the gain of integrability can directly be extracted
from the proof of the Poincaré inequality.
The first novel idea is to use three cylinders $Q^-$, $Q^0$ and
$Q^+$, and mollify the characteristic function only in the
intermediate cylinder $Q^0$, see Figure~\ref{fig:trajectories}.
\begin{figure}{
\centering
\begin{tikzpicture}[scale =0.8]
  \draw [black] (-4,0) -- (4,0);
   \draw[black] (-4,-12) -- (4,-12);
  \draw [black](-4,0) -- (-4,-12);
  \draw [black](4,0) -- (4,-12) node[anchor=north, scale=1]
  {\footnotesize{$\tilde \Omega$}};
   \draw [teal](-1,0) -- (1, 0);
   \draw [teal](-1,-2) -- (1, -2);
   \draw [teal](-1,0) -- node[anchor=east, scale=1] {$Q^+$} (-1,
   -2);
   \draw [teal](1,0) -- (1, -2) ;
    \draw[olive] (-1,-5) -- (1, -5);
   \draw [olive](-1,-7) -- (1, -7);
   \draw [olive](-1,-5) -- (-1, -7);
   \draw [olive](1,-5) -- node[anchor=west, scale=1] {$Q^0$}(1,
   -7);

    \draw[violet] (-1,-10) -- (1, -10);
   \draw [violet](-1,-12) -- (1, -12);
   \draw [violet](-1,-10) -- (-1, -12);
   \draw [violet](1,-10)  -- node[anchor=west, scale=1] {$Q^-$}
   (1, -12);
   \draw [->, cyan, dashed, line width=0.5mm]  plot [smooth,
   tension = 0.7] coordinates {(0,-1)(3.6, -1.7)  (0,-6)};
   \node [cyan] at (2.8, -2.5) {$\Gamma_+$};
   \draw [->, magenta, dashed, line width=0.5mm] plot [smooth,
   tension = 0.7] coordinates {(0,-11)(-3.6, -10.3) (0,-6)};
    \node [magenta] at (-2.8, -8.3) {$\Gamma_-$};
   \coordinate[label={[teal]above:\footnotesize{$z_+$}}] (z_+) at
   (0, -1);
   \draw [teal] plot[only marks,mark=x,mark size=2pt]
   coordinates {(0, -1)};
   \coordinate[label={[olive]above:\footnotesize{$z_0$}}] (z_0)
   at (0, -6);
   \draw [olive] plot[only marks,mark=x,mark size=2pt]
   coordinates {(0, -6)};
   \coordinate[label={[violet]above:\footnotesize{$z_-$}}] (z_-)
   at (0, -11);
   \draw [violet] plot[only marks,mark=x,mark size=2pt]
   coordinates {(0, -11)};
\end{tikzpicture}
}
\caption{Construction of the trajectories. The curve $\Gamma_+$
  connects any point $z_+ \in Q^+$ to some intermediate point
  $z_0 \in Q^0$, whereas the curve $\Gamma_-$ connects any point
  $z_- \in Q^-$ to some intermediate point
  $z_0 \in Q^0$.}\label{fig:trajectories}
\end{figure}
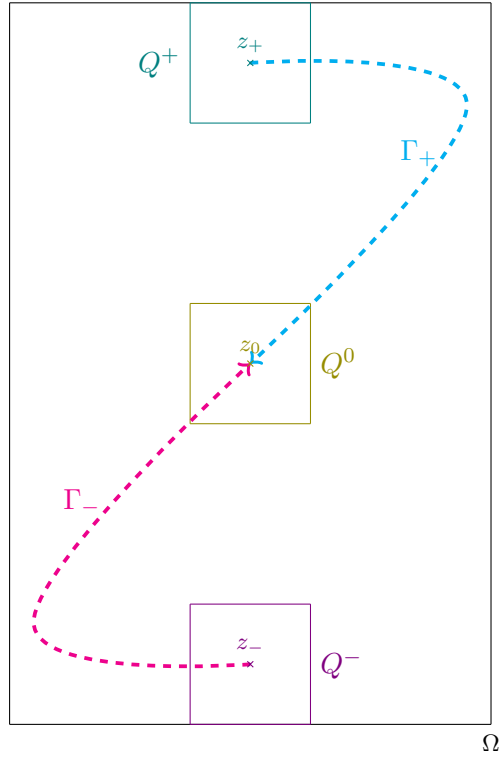
It is easy to deduce from the triangle inequality
\begin{equation}\label{eq:split-Q0}
  \begin{aligned}
    \fint_{Q^+} \left[ f(z_+) - \langle f \rangle_{Q^-} \right]_+ \dd z_+
    &\le
    \fint_{Q^+} \left( \fint_{Q^0} \left[ f(z_+) -
        f(z_0) \right] \varphi_\var(x_0, v_0)\dd z_0 \right)_+ \dd z_+ \\
    &\quad + \fint_{Q^-} \left( \fint_{Q^0} \left[ f(z_0) -
        f(z_-) \right]\varphi_\var(x_0, v_0) \dd z_0 \right)_+ \dd z_-
  \end{aligned}
\end{equation}
for $\varphi_\var \ge 0$ smooth, with mass $1$, where
$z_0=(x_0,v_0,t_0) \in Q^0, z_+ \in Q^+, z_- \in Q^-$, and where
we have denoted by $\fint_{Q} := |Q|^{-1} \int_Q$ the normalized
integral. We then construct trajectories to connect $z_+$ to
$z_0$, and $z_-$ to $z_0$. This removes the first error term we
had introduced in relation to the mollification $\varphi_\var$.

The second novel idea is to connect the points by a curved
trajectory, rather than a piecewise affine one, by solving a
control problem with a well-chosen forcing (i.e. control
function), in order to reduce the order of the singularity of the
integration by parts near $z_{\pm}$. More precisely, the first
idea is to consider trajectories whose speed diverges at
$z_{\pm}$ but with an integrable divergence so that the
trajectories remain bounded. This idea is inspired from the
reading of~\cite{niebel2022kinetic}, but we propose simpler and
more systematic trajectories. The second important idea is to use
$\kappa+1$ linearly independent control power functions with such
well-balanced diverging behaviour at $z_{\pm}$ in order to
connect all points between the cylinders, thanks to the Hörmander
commutator condition.

\subsection{Outline}

In \cref{sec:trajectories} we provide the explicit construction
of the trajectories connecting points in the future to points in
the past. These trajectories are then used in
Section~\ref{sec:proof} to prove the Poincaré inequality; they
allow to estimate the $L^1$ norm in the future of the difference
between $f$ and the past average of $f$, or in other words, the
left hand side of \eqref{eq:poincare}. In \cref{sec:DGlemmas}, we
prove the Harnack inequalities on the basis of this Poincaré
inequality. 
\\
\\
\paragraph{\textbf{Acknowledgements}}
We are grateful to Lukas Niebel and Rico Zacher for stimulating discussions and inspiring ideas on the subject of this article.

\section{Construction of the trajectories}
\label{sec:trajectories}

\subsection{The base case}
\label{sec:toy-local}

Let us first consider the case with one commutator ($\kappa=1$),
$d_1=d_0=d$ and $\sB_1=\I$. Given three points
$z_+ \in Q^+, z_0 \in Q^0$ and $z_- \in Q^-$, we want to
construct two paths $s \rightarrow \Gamma_+(s)$ and
$s \rightarrow \Gamma_-(s)$ for $s \in [0, 1]$ such that (see
Figures~\ref{fig:trajectories} and~\ref{fig:trajectories-toy})
\begin{equation*}
  \begin{aligned}
    & \Gamma_+(s) = \left(X^{(1)}_+(s), X^{(0)}_+(s),
    T_+(s)\right)^T, \qquad \Gamma_+(0) = z_+, \qquad \Gamma_+(1) =
    z_0,\\
    & \Gamma_-(s) = \left(X^{(1)}_-(s), X^{(0)}_-(s),
    T_-(s)\right)^T, \qquad \Gamma_-(0) = z_-, \qquad \Gamma_-(1) =
    z_0,
  \end{aligned}
\end{equation*}
where
\begin{equation}
  \label{eq:control}
  \begin{aligned}
    \begin{cases}
      \ds X^{(0)}_{\pm}(s) 
      = \mathsf m_\pm ^{(0)}
      g_0''(s) +\mathsf m^{(1)}_\pm
      g_1 ''(s), \\[2mm]
      \ds X^{(1)}_\pm(s) = \delta_\pm X^{(0)}_\pm(s),\\[2mm]
      \ds T_\pm(s) = \delta_\pm,
    \end{cases}
  \end{aligned}
\end{equation}
where $\delta_\pm := t_0 - t_\pm$ and the control functions
$g_i \in C^2((0,1])$ with $g_i(0)=g_i'(0)=0$ for $i=0,1$, and
$\mathsf m^{(i)} _\pm \in \R^d$ for $i=0,1$.
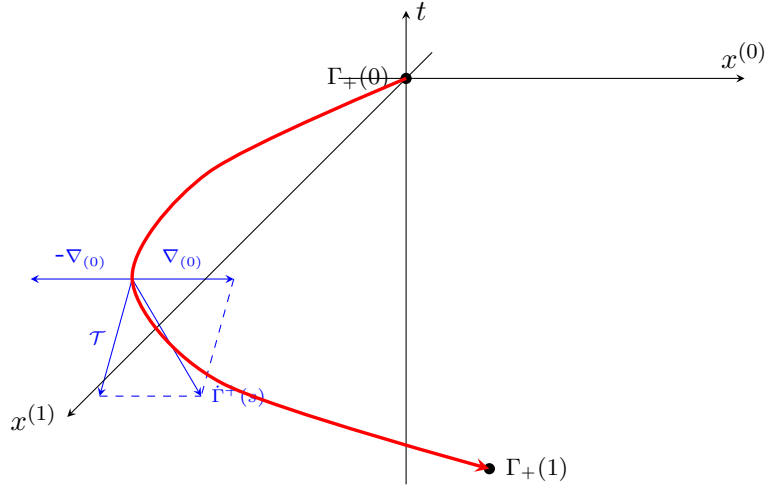
\begin{figure}{ \centering
    \begin{tikzpicture}[scale = 0.45,>=stealth]
      \draw[->] (xyz cs:x=-2) -- (xyz cs:x=10) node[above] {$x^{(0)}$};
      \draw[->] (xyz cs:y=-12) -- (xyz cs:y=2) node[right] {$t$};
      \draw[->] (xyz cs:z=-2) -- (xyz cs:z=26) node[left] {$x^{(1)}$};
      \node[fill, circle,inner
      sep=1.5pt,label={left:\footnotesize{$\Gamma_+(0)$}}] at
      (xyz cs:x=0,z=0, y = 0) {};
      \node[fill,circle,inner
      sep=1.5pt,label={right:\footnotesize{$\Gamma_+(1)$}}] at
      (xyz cs:x=4,z=4, y = -10) {};
      \draw[smooth, ->, red, line width=0.45mm] plot coordinates
      {(xyz cs:x=0,z=0, y = 0) (xyz cs:x=-5.56,z=0.732, y = -2.5)
        (xyz cs:x=-7.16,z=2.42, y = -5)  (xyz cs:x=-3.94,z=3.91,
        y = -7.5)  (xyz cs:x=4,z=4, y = -10)};
      \begin{scope}[on background layer]
         \draw[smooth, ->, blue] (xyz cs:x=-7.16,z=2.42, y = -5)
         -- node[above] {-\tiny{$\nabla_{(0)}$}}(xyz cs:x=-10.16,
         z=2.42, y = -5);
      \draw[smooth, ->, blue] (xyz cs:x=-7.16,z=2.42, y = -5) --
      node[above] {\tiny{$\nabla_{(0)}$}}(xyz cs:x=-4.16, z=2.42,
      y = -5);
      \draw[smooth, ->, blue] (xyz cs:x=-7.16,z=2.42, y = -5) --
      node[left] {\tiny{$\cT$}}(xyz cs:x=-7.16, z=4.92, y =
      -7.5);
      \draw[smooth, ->, blue] (xyz cs:x=-7.16,z=2.42, y = -5) --
      (xyz cs:x=-4.16, z=4.92, y = -7.5)node[right] {\tiny{$\dot
          {\Gamma}^+(s)$}};
      \draw[dashed, blue](xyz cs:x=-7.16, z=4.92, y = -7.5) --
      (xyz cs:x=-4.16, z=4.92, y = -7.5);
        \draw[dashed, blue](xyz cs:x=-4.16, z=2.42, y = -5)--
        (xyz cs:x=-4.16, z=4.92, y = -7.5);
    \end{scope}
    \end{tikzpicture}
  }
  \caption{Construction of the trajectories. The curve $\Gamma_+$
    connects any point $\Gamma_+(0) = z_+ \in Q^+$ to some
    intermediate point $\Gamma_+(1) = z_0 \in Q^0$ along the
    vector fields $\cT$ and
    $\nabla_{(0)}$.} \label{fig:trajectories-toy}
\end{figure}
Solving the differential equations yields
\begin{equation}
  \label{eq:control-solved}
  \begin{aligned}
    \begin{cases}
      X^{(0)}_{\pm}(s) = x^{(0)} _\pm + \mathsf m_\pm ^{(0)}
      g_0'(s) + \mathsf m^{(1)}_\pm
      g_1 '(s), \\[2mm]
      X^{(1)}_\pm(s) = x^{(1)} _\pm + s \delta_\pm x^{(0)} _\pm +
      \delta_\pm \left[ \mathsf m_\pm ^{(0)} g_0(s) + \mathsf
        m^{(1)}_\pm
        g_1 (s) \right] ,\\[2mm]
      T_\pm(s) = s t_0 + (1-s) t_\pm.
    \end{cases}
  \end{aligned}
\end{equation}

Let us denote $\U_d$ the $d$-vector of $1$'s, $\V_d$ the
$d$-vector of $0$'s, $\I_d$ the $d\times d$ identity matrix,
$\OO_d$ the $d\times d$ zero block, and
\begin{align*}
  & \mathsf M_\pm :=
  \begin{pmatrix}
    \mathsf m^{(0)}_\pm \\
    \mathsf m^{(1)} _\pm
  \end{pmatrix}, \quad
  \sW(s) :=
  \begin{pmatrix}
    g_0'(s) \I_d & g_1 '(s) \I_d \\
    g_0(s) \I_d & g_1(s) \I_d
  \end{pmatrix}, \\
  & \sW_\pm ^\delta (s) :=
  \begin{pmatrix}
    \I_d & \OO_d \\
    \OO_d & \delta_\pm \I_d
  \end{pmatrix} \sW(s), \quad
  \mathsf Y_\pm =
  \begin{pmatrix}
    \mathsf y^{(0)} _\pm \\
    \mathsf y^{(1)} _\pm
  \end{pmatrix}
  :=
  \begin{pmatrix}
    x^{(0)}_0 - x^{(0)}_\pm \\
    x^{(1)}_0 - x^{(1)}_\pm - \delta_\pm x^{(0)}_\pm
  \end{pmatrix}.
\end{align*}
The boundary conditions $\Gamma_\pm(1)=z_0$ impose
\begin{align}\label{eq:boundary-condition}
  \sW_\pm ^\delta(1) \mathsf M_\pm = \mathsf Y_\pm \quad \Longrightarrow
  \quad \mathsf M_\pm = \sW_\pm ^\delta(1)^{-1} \mathsf Y_\pm,
\end{align}
provided that the Wronskian matrix $\sW(1)$ is invertible at
$s=1$. We deduce
\begin{align*}
  \begin{pmatrix}
    X^{(0)}_\pm (s) \\
    X^{(1)}_\pm (s)
  \end{pmatrix}
  =   \sW_\pm ^\delta(s) \mathsf M_\pm +
  \begin{pmatrix}
  	\I_d & \OO_d \\
	\delta_\pm s \I_d & \I_d
  \end{pmatrix}
  \begin{pmatrix}
  x_\pm^{(0)}\\
  x_\pm^{(1)}
  \end{pmatrix}.
\end{align*}
Using~\eqref{eq:boundary-condition} we obtain
\begin{align*}
  \begin{pmatrix}
    X^{(0)}_\pm (s) \\
    X^{(1)}_\pm (s)
  \end{pmatrix}
  &= \sW_\pm ^\delta(s) \sW_\pm ^\delta(1)^{-1} \mathsf Y_\pm+
        \begin{pmatrix}
          \I_d & \OO_d \\
          \delta_\pm s \I_d & \I_d
        \end{pmatrix}
        \begin{pmatrix}
          x_\pm^{(0)}\\
          x_\pm^{(1)}
        \end{pmatrix}\\
  &= W^\delta_\pm(s) \big[W^\delta_\pm(1)\big]^{-1}
    \begin{pmatrix}
      x_0^{(0)}\\
      x_0^{(1)}
    \end{pmatrix}\\
  &\quad- W^\delta_\pm(s) \big[W^\delta_\pm(1)\big]^{-1}
    \begin{pmatrix}
    x^{(0)}_\pm \\ x^{(1)}_\pm + \delta_\pm x^{(0)}_\pm
  \end{pmatrix}
  +
  \begin{pmatrix}
    \I_d & \OO_d \\
    \delta_\pm s \I_d & \I_d
  \end{pmatrix}
  \begin{pmatrix}
    x_\pm^{(0)}\\
    x_\pm^{(1)}
  \end{pmatrix} \\
  & = \mathfrak A^s \begin{pmatrix} x_0^{(0)} \\
    x_0^{(1)} \end{pmatrix} + \mathfrak B^s = \mathfrak A^s
  x_0 + \mathfrak B^s =: \Phi^s_\pm (x_0)
\end{align*}
is an affine function with matrix
$\mathfrak A^s:=\sW_\pm ^\delta (s) [\sW_\pm ^\delta (1)]^{-1}$
and a vector $\mathfrak B^s$ that depend only on $s$ and $x_\pm$.
We still have to prove that the matrix $\mathfrak A^s$ is
invertible for $s \in (0,1]$. If so, given $s \in (0,1]$, the
derivative along the first variable $X^{(0)}_\pm(s)$ of the
inverse is
\begin{equation*}
  \nabla_{(0)} \left( \Phi_\pm ^s \right)^{-1} =
  \sW^\delta_\pm(1)\big[\sW^\delta_\pm(s)\big]^{-1}
  \left( \begin{matrix}
      \U_d \\ \V_d
  \end{matrix} \right).
\end{equation*}
We choose
$g_i(s) := (1+\alpha_i)^{-1} (2+\alpha_i)^{-1} s^{2+\alpha_i}$,
$i=0,1$, with $\alpha_0, \alpha_1 \in (-1,0)$ and
$\alpha_0 \not = \alpha_1$. Then the Wronskian matrix is
invertible for all $s \not =0$:
\begin{equation}
\begin{aligned}
  \label{toy-jacobian}
  \det \sW(s)
  &= \left(g_0'(s) g_1(s)- g_0(s) g_1'(s)\right)^d \\
  &= \frac{(\alpha_0 - \alpha_1)^d}{(1+\alpha_0)^d(1+\alpha_1)^d
    (2+\alpha_0)^d(2+\alpha_1)^d} s^{(3+\alpha_0+\alpha_1)d} \not = 0.
\end{aligned}
\end{equation}
Our choice of two linearly independent control functions $g_0$
and $g_1$ is guided by ensuring the Wronskian matrix to be
invertible. Moreover,
\begin{equation*}
\begin{aligned}
   &\sW_\pm ^\delta (1) =
  \begin{pmatrix}
    \frac{\I_d}{(1+\alpha_0)} & \frac{\I_d}{(1+\alpha_1)} \\
    \frac{\delta_\pm \I_d}{(1+\alpha_0)(2+\alpha_0)} &
    \frac{\delta_\pm \I_d}{(1+\alpha_1)(2+\alpha_1)}
  \end{pmatrix}, \\
  &\sW_\pm ^\delta (s)^{-1} = \frac{1}{\delta_\pm\det \sW(s)}
  \begin{pmatrix}
    \frac{\delta_\pm s^{2+\alpha_1} \I_d}{(1+\alpha_1)(2+\alpha_1)} &
    \frac{-s^{1+\alpha_1} \I_d}{(1+\alpha_1)} \\
    \frac{-\delta_\pm s^{(2+\alpha_0)} \I_d}{(1+\alpha_0)(2+\alpha_0)}
    & \frac{s^{1+\alpha_0} \I_d}{(1+\alpha_0)}
  \end{pmatrix},
 \end{aligned}
\end{equation*}
which finally yields
\begin{align*}
  \nabla_{(0)} \left( \Phi_\pm ^s \right)^{-1}
  & = W^\delta_\pm(1)\big[W^\delta_\pm(s)\big]^{-1}
    \begin{pmatrix}
      \U_d \\ \V_d
    \end{pmatrix} \\
  & = \frac{1}{(\alpha_0-\alpha_1)}
    \begin{pmatrix}
      (2+\alpha_0) s^{-1-\alpha_0} - (2+\alpha_1) s^{-1-\alpha_1}
      \U_d
      \\
      \delta_\pm \left( s^{-1-\alpha_0} - s^{-1-\alpha_1} \right)
      \U_d
    \end{pmatrix}\\
  &= O(s^{-1-\alpha_0}) + O(s^{-1-\alpha_1}),
\end{align*}
which remains integrable for $\alpha_0,\alpha_1 \in
(-1,0)$ with $\alpha_0 \neq \alpha_1$. Observe also that our choice of control functions $g_0$
and $g_1$ implies that all their derivatives up to order two are
integrable on $s \in [0,1]$, which implies that the trajectories
are bounded (with integrable tangent vector field).

\subsection{The general case}

Let us now consider the general case with $\kappa \ge 2$
commutators, general dimensions
$d_0 \geq d_1 \geq \dots \geq d_{\kappa}\geq 1$ and surjective
$d_i \times d_{i-1}$ matrices $\sB_i$. Consider for $s \in [0,1]$
the paths
\begin{equation*}
  \begin{aligned}
    & \Gamma_+(s) = \left(X^{(\kappa)}_+(s), \dots, X^{(0)}_+(s),
    T_+(s)\right)^T, \qquad \Gamma_+(0) = z_+, \qquad \Gamma_+(1) =
    z_0,\\
    & \Gamma_-(s) = \left(X^{(\kappa)}_-(s), \dots, X^{(0)}_-(s),
    T_-(s)\right)^T, \qquad \Gamma_-(0) = z_-, \qquad \Gamma_-(1) =
    z_0,
  \end{aligned}
\end{equation*}
and the following control problem
\begin{equation}
  \label{eq:control-general}
  \begin{aligned}
    \begin{cases}
      \ds X^{(0)}_{\pm}(s) 
      = \sum_{i=0} ^\kappa
      \mathsf m_\pm ^{(i)} g_i^{(\kappa+1)} (s), \\[2mm]
      \ds X^{(1)}_\pm(s) = \delta_\pm \sB_1 X^{(0)}_\pm(s),\\[2mm]
      \quad \vdots \\[2mm]
      \ds X^{(\kappa)}_\pm(s) = \delta_\pm  \sB_\kappa X^{(\kappa-1)}_\pm(s), \\[2mm]
      \ds T_\pm(s) = \delta_\pm
    \end{cases}
  \end{aligned}
\end{equation}
with $\delta_\pm := t_0 - t_\pm$ and the control functions
$g_i \in C^{\kappa +1}((0,1])$ so that
$g_i(0)=g_i'(0)= \dots =g_i^{(\kappa)}(0)=0$ for
$i=0,1,\dots,\kappa$, and $\mathsf m^{(i)} _\pm \in \R^{d_0}$ for
$i=0,\dots,\kappa$. Then
\begin{equation}
  \label{eq:control-solved-general}
  \begin{aligned}
    \begin{cases}
      X^{(0)}_{\pm}(s) = x^{(0)} _\pm + \sum_{i=0} ^\kappa
      \mathsf m_\pm ^{(i)}
      g_i^{(\kappa)}(s), \\[2mm]
      X^{(1)}_\pm(s) = x^{(1)} _\pm + s \delta_\pm \tilde \sB_{1,1} x^{(0)} _\pm +
      \delta_\pm \left[ \sum_{i=0} ^\kappa \tilde \sB_{1,1} \mathsf
          m_\pm ^{(i)} g_i^{(\kappa-1)} (s) \right] ,\\[2mm]
      X^{(2)}_\pm(s) = x^{(2)} _\pm + s \delta_\pm \tilde
      \sB_{2,2} x^{(1)} _\pm +
      \frac{(s\delta_\pm)^2}{2} \tilde \sB_{2,1} x^{(0)} _\pm +
      \delta_\pm ^2 \left[ \sum_{i=0} ^\kappa \tilde \sB_{2,1} \mathsf m_\pm
          ^{(i)} g_i^{(\kappa-2)}(s) \right] ,\\[2mm]
      \quad \vdots \\[2mm]
      X^{(\kappa)}_\pm(s) = x_\pm ^{(\kappa)} + \sum_{i=1} ^\kappa \frac{(s
        \delta_\pm)^i}{i!} \tilde \sB_{\kappa,\kappa-i+1} x^{(\kappa-i)} _\pm + \delta_\pm
      ^\kappa \left[ \sum_{i=0} ^\kappa \tilde \sB_{\kappa,1} \mathsf
          m_\pm ^{(i)} g_i (s) \right] ,\\[2mm]
      T_\pm(s) = s t_0 + (1-s) t_\pm
    \end{cases}
  \end{aligned}
\end{equation}
where $\tilde \sB_{i,j} := \sB_i \sB_{i-1} \cdots \sB_j$ for
$1 \le j \le i$, which is a $d_i \times d_{j-1}$ block matrix. We denote by $\OO$ a zero block matrix with arbitrary size and
\begin{align}\label{eq:matrices}
  &  \mathsf M_\pm :=
  \begin{pmatrix}
    \mathsf m^{(0)}_\pm \\
    \vdots \\
    \mathsf m^{(\kappa)} _\pm
  \end{pmatrix}, \quad
  \sW(s) :=
  \begin{pmatrix}
    g_0^{(\kappa)}(s) \I_{d_0} & \dots & g_\kappa ^{(\kappa)} (s)
    \I_{d_0} \\
    \vdots & \vdots & \vdots \\
    g_0(s) \I_{d_0} & \dots & g_\kappa(s) \I_{d_0}
  \end{pmatrix}, \\
  & \sW_\pm ^\delta(s) :=
  \begin{pmatrix}
    \I_{d_0} & \OO & \cdots & \cdots & \OO \\
    \OO & \tilde \sB_{1,1} \delta_{\pm} & \OO & \cdots & \OO \\
    \vdots & \OO &  \tilde\sB_{2,1} \delta_\pm ^2 & \ddots & \OO \\
    \vdots & \vdots & \ddots & \ddots & \OO \\
    \OO & \cdots & \cdots & \OO & \tilde \sB_{\kappa,1} \delta_\pm ^\kappa
  \end{pmatrix}
  W(s) =:  \mathsf R \,  \mathsf W(s), \\
  & \mathsf Y_\pm =
  \begin{pmatrix}
    \mathsf y^{(0)} _\pm \\
    \vdots \\
    \vdots \\
   \mathsf  y^{(\kappa)} _\pm
  \end{pmatrix}
  := \begin{pmatrix}
    \mathsf x_0 ^{(0)} \\
    \vdots \\
    \vdots \\
   \mathsf  x_0 ^{(\kappa)}
  \end{pmatrix} - \mathsf T_\pm(1) \begin{pmatrix}
    \mathsf x^{(0)} _\pm \\
    \vdots \\
    \vdots \\
   \mathsf  x^{(\kappa)} _\pm
 \end{pmatrix}, \\
  & \sT_\pm (s) := \begin{pmatrix}
    \I_{d_0} & \OO & \cdots & \cdots & \OO \\
    (s\delta_\pm) \tilde \sB_{1,1} & \Id_{d_1} & \OO & \cdots & \OO \\
    \vdots & (s \delta_\pm) \tilde \sB_{2,2} & \Id_{d_2} & \ddots & \OO \\
    \vdots & \vdots & \ddots & \ddots & \OO \\
    \frac{(s\delta_\pm)^\kappa}{\kappa!} \tilde \sB_{\kappa,1} &
    \cdots & \cdots & (s \delta_\pm) \tilde \sB_{\kappa,\kappa} & \Id_{d_\kappa}
  \end{pmatrix}.
\end{align}
Note that the matrix $\sW(s)$ is the Wronskian of the family of
functions $(g_i)_{i=0} ^\kappa$ and is invertible when they are
the linearly independent solutions to a $(\kappa+1)$-order linear
ODE. The boundary conditions $\Gamma_\pm(1)=z_0$ impose
\begin{align*}
  \sW_\pm ^\delta (1) \mathsf M_\pm = \mathsf Y_\pm \quad \Longrightarrow
  \quad \mathsf M_\pm = \sW_\pm ^\delta (1)^{-1}\mathsf Y_\pm.
\end{align*}
Note the pseudo-inverse of the matrix $\sW_\pm ^\delta (1)$ always exists, and is given by the product of the inverse of $\sW(1)$ times the left inverse of $\mathsf R$.
Thus we deduce that
\begin{align}
  \nonumber
  \begin{pmatrix}
    X^{(0)}_\pm (s) \\\
    \vdots \\
    X^{(\kappa)}_\pm (s)
  \end{pmatrix}
  &=  \sW_\pm ^\delta (s) \big[\sW_\pm ^\delta
    (1)\big]^{-1} x_0 
     + \left( \sT_{\pm} (s) - \sW_\pm ^\delta (s) \big[\sW_\pm ^\delta
    (1)\big]^{-1} \sT_{\pm} (1) \right)  x_\pm \\ 
  \label{eq:mapping}
  & = \mathfrak A^s x_0 + \mathfrak B^s =: \Phi^s_\pm (x_0)
\end{align}
is an affine function with matrix
$\mathfrak A^s:=\sW_\pm ^\delta (s) [\sW_\pm ^\delta (1)]^{-1} =  \mathsf R \mathsf W (s) \mathsf W(1)^{-1} \mathsf R^{-1}$
and vector $\mathfrak B^s$ that depend only on $s$ and $x_\pm$.
We still have to prove that the matrix $\mathfrak A^s$ is
invertible for $s \in (0,1]$. If so, given $s \in (0,1]$, the
derivative along the first variable $X^{(0)}_\pm(s)$ of the
inverse is
\begin{equation}
  \label{eq:first-der-k}
  \nabla_{(0)} \left( \Phi_\pm ^s \right)^{-1} = \sW_\pm ^\delta
  (1) \big[\sW_\pm ^\delta(s)\big]^{-1}
  \begin{pmatrix}
     \U_{d_0} \\ \V_{d_1} \\ \vdots \\ \V_{d_\kappa}
  \end{pmatrix}.
\end{equation}
We finally choose the control functions
\begin{equation}\label{eq:choice-g}
  g_i(s) := \frac{s^{1+\kappa+\alpha_i}}{(1+\alpha_i)(2+\alpha_i)
    \cdots (1+\kappa+\alpha_i)}
\end{equation}
with $\alpha_i \in (-1,0)$ pairwise distinct for
$i=0,\dots,\kappa$. 
Thus, we are left to prove that $\mathsf W(1)$ and $\mathfrak{A}_s$ are invertible for $s \in (0,1]$, but this boils down to prove the matrix $\mathsf W(s)$ is invertible for $s \in (0,1]$. Indeed, when $s \ne 0$ the Wronskian matrix is
invertible with the precise $s$-behaviour given by the following
lemma:

\begin{lemma}
  \label{lem:wronskian}
  Consider the $(\kappa+1)\times(\kappa+1)$ matrix
  $P(s):= (s^{1+\alpha_j+i}/p_{i,j})_{i,j=0} ^\kappa$ with
  $p_{i,j}:=(1+\alpha_j) \cdots (1+i+\alpha_j)$ and the matrix\footnote{The notation $P\otimes \I_{d_0}$ is the classical tensor product between the matrices $P$
  and $\I_{d_0}$.} $W:=P\otimes \I_{d_0}$. Then their
  determinants are given by
  \begin{equation}
    \label{eq:det-k}
    \begin{aligned}
    &\det P(s)=
    \left( \frac{ \prod_{i, j=0 | i<j}^\kappa
      (\alpha_i - \alpha_j)}{\prod_{i,j=0} ^\kappa(1+i+\alpha_j)}
 \right) s^{ \frac{(\kappa+1)(\kappa+2)}{2}+\sum_{i=0}^\kappa \alpha_i}\\
    &\det \sW(s) = \left( \frac{ \prod_{i, j=0 | i<j}^\kappa
      (\alpha_i - \alpha_j)^{d_0}}{\prod_{i,j=0} ^\kappa(1+i+\alpha_j)^{d_0}}
 \right) s^{d_0 \frac{(\kappa+1)(\kappa+2)}{2}+d_0\sum_{i=0}^\kappa \alpha_i}.
  \end{aligned}
  \end{equation}
\end{lemma}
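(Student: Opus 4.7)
The plan is to establish the formula for $\det P(s)$ first and then deduce the one for $\det \sW(s)$ from the tensor product identity $\det(A \otimes \I_m) = (\det A)^m$, with $A$ a square matrix of size $n$, applied to $\sW = P \otimes \I_{d_0}$. In this way the second identity in~\eqref{eq:det-k} follows at once from the first since the exponent $d_0$ and the weight $(\kappa+1)$ (the size of $P$) appear exactly as stated.

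To compute $\det P(s)$, the natural first step is to strip off the $s$-dependence. Since the $(i,j)$-entry of $P(s)$ is $s^{1+i+\alpha_j}/p_{i,j}$, I factor $s^{1+\alpha_j}$ out of column $j$ and $s^i$ out of row $i$ (for $i,j=0,\dots,\kappa$). The total $s$-factor pulled out is
\begin{equation*}
  s^{\sum_{j=0}^\kappa (1+\alpha_j) + \sum_{i=0}^\kappa i}
  = s^{(\kappa+1) + \kappa(\kappa+1)/2 + \sum_{j=0}^\kappa \alpha_j}
  = s^{(\kappa+1)(\kappa+2)/2 + \sum_{j=0}^\kappa \alpha_j},
\end{equation*}
which is exactly the $s$-exponent in \eqref{eq:det-k}, and the problem reduces to evaluating the constant matrix $M := [1/p_{i,j}]_{i,j=0}^\kappa$.

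For $M$, the key observation is that factoring $1/p_{\kappa,j}$ out of column $j$ gives the scalar $\prod_{j=0}^\kappa 1/p_{\kappa,j} = \prod_{i,j=0}^\kappa (1+i+\alpha_j)^{-1}$, which is precisely the denominator appearing in \eqref{eq:det-k}. The residual matrix has $(i,j)$-entry $p_{\kappa,j}/p_{i,j} = \prod_{k=i+1}^\kappa (1+k+\alpha_j)$, which is a \emph{monic} polynomial in $\alpha_j$ of degree $\kappa-i$. Thus row $i$ consists of monic polynomials of strictly decreasing degree $\kappa - i$ as $i$ grows; by elementary row operations (adding suitable multiples of lower rows, of lower degree, to higher ones) one can reduce this to the matrix with entries $\alpha_j^{\kappa-i}$ without changing the determinant. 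Reversing the row order transforms this into the standard Vandermonde matrix $[\alpha_j^i]_{i,j=0}^\kappa$, with determinant $\prod_{0 \le i < j \le \kappa}(\alpha_j - \alpha_i)$.

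The only bookkeeping to finish is the sign from the row reversal, which is $(-1)^{\lfloor (\kappa+1)/2\rfloor}$; one checks case by case modulo 4 that this has the same parity as $\binom{\kappa+1}{2} = \kappa(\kappa+1)/2$, so the sign is absorbed exactly into replacing $\prod_{i<j}(\alpha_j - \alpha_i)$ by $\prod_{i<j}(\alpha_i - \alpha_j)$ as in \eqref{eq:det-k}. Combining the three factors (the $s$-power, the product of denominators, and the Vandermonde numerator) yields the formula for $\det P(s)$, and then the tensor product identity gives the formula for $\det \sW(s)$. The whole argument is entirely elementary; the only subtle step is matching the sign of the row reversal with $(-1)^{\kappa(\kappa+1)/2}$, and the main conceptual point is recognising the Vandermonde structure hidden in $[p_{\kappa,j}/p_{i,j}]$ once the denominators have been normalised.
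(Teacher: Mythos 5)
Your proof is correct and follows essentially the same route as the paper's: factor out the $s$-powers, pull the denominators $\prod_{i,j}(1+i+\alpha_j)$ out column by column, recognise that the remaining entries are monic polynomials of degree $\kappa-i$ in $\alpha_j$, reduce by row operations to a (row-reversed) Vandermonde matrix, and tensor up via $\det(P\otimes \I_{d_0})=(\det P)^{d_0}$. The only difference is that you carry out explicitly the sign bookkeeping of the row reversal, which the paper absorbs implicitly into writing the Vandermonde value as $\prod_{i<j}(\alpha_i-\alpha_j)$.
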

\begin{proof}[Proof of Lemma~\ref{lem:wronskian}]
The factorisation in $s$ gives the exponents. It remains to compute the determinant of matrix ${P}(1):= (1/p_{i,j})_{i,j=0} ^\kappa$. From now on,
 for every $n\geq1$, we define $Q_n(X)=\prod_{j=0}^{n-1} (X+\kappa+1-j)$ a polynomial of degree $n$. Then, we observe:
$$\prod_{i,j=0} ^\kappa (1+i+\alpha_j) \det {P}(1)=
	\det
	\begin{pmatrix}
    Q_{\kappa}(\alpha_0) &  Q_{\kappa}(\alpha_1) & \cdots &  Q_{\kappa}(\alpha_{\kappa}) \\
      Q_{\kappa-1}(\alpha_0) &  Q_{\kappa-1}(\alpha_1) & \cdots &  Q_{\kappa-1}(\alpha_{\kappa}) \\
    \vdots & \ddots & \ddots & \vdots  \\
 Q_{1}(\alpha_0) &  Q_{1}(\alpha_1) & \cdots & Q_{1}(\alpha_{\kappa}) \\
    1 & 1 & \cdots & 1
\end{pmatrix}, $$
where the equivalence follows by multiplying the $j^{th}$-column by $\prod \limits_{i=0}^\kappa (1+i+\alpha_j)$ according to classical laws of multiplication by scalars for determinants of matrices. Then the determinant we are interested in is equal to the determinant of the equivalent matrix that comes from performing linear combinations of rows to get $\alpha_{j-1}^{\kappa+1-i}$ on the $i^{th}$ row and $j^{th}$ column which gives a Vandermonde determinant of value $\prod_{i, j=0 | i<j}^\kappa (\alpha_i - \alpha_j)$. The determinant of $W$ directly follows by tensor calculus.
\end{proof}

Then, by Lemma \ref{lem:wronskian}, we thus find
\begin{equation*}
  \sW (s)^{-1} :=
  \frac{1}{\det \sW(s)} \operatorname{Comatrix} \sW(s).
\end{equation*}

Going back to the calculation of~\eqref{eq:first-der-k}, and combining~\eqref{eq:det-k}
with the $s$-scaling of the entries of the cofactor matrix and our choice \eqref{eq:choice-g}, we get
\begin{equation}
  \label{eq:grad-phi}
  \nabla_{(0)} \left( \Phi^s_\pm \right)^{-1}
  = \sW_\pm ^\delta (1) \big[\sW_\pm ^\delta(s)\big]^{-1}
\begin{pmatrix}
    \Id_{d_0} \\ \OO_{d_1} \\ \vdots \\ \OO_{d_\kappa}
  \end{pmatrix}
  = \sum_{i=0} ^\kappa O(s^{-1-\alpha_i}),
\end{equation}
which is integrable for $\alpha_i \in (-1,0)$. Observe also that
our choice of control functions $g_i$, $i=1,\dots,\kappa$,
implies that all their derivatives up to order $\kappa+1$ are
integrable on $s \in [0,1]$, which implies that the trajectories
are bounded (with integrable tangent vector field).

\section{Proof of the Poincaré inequality}
\label{sec:proof}

We consider the three cylinders $Q^+,Q^-, Q^0 \subset \tilde \Omega$ as
in Figure~\ref{fig:trajectories}, operators $\mathcal A$ and
$\mathcal B$ that satisfy {\bf (H)}, and a weak sub-solution $f$
of~\eqref{eq:hormander}. Let $\varphi \in C_c^\infty(\R^{N})$ be a
non-negative function in the first $N$ variables
$(x^{(\kappa)},\dots,x^{(0)})$ (excluding the time component)
with compact support in any time-slice of $Q^0$ and such that
$\fint_{Q^0} \varphi \dd x = 1$. Then
\begin{equation}
  \label{eq:poincare-aux}
  \begin{aligned}
    \fint_{Q^+} \Big(f(z_+) - \langle f\rangle_{Q^-}\Big)_+ \dd
    z_+ & \leq \fint_{Q^+} \fint_{Q^-} \Big(f(z_+) - f(z_{-})
    \Big)_+
    \dd z_{-} \dd z_+ \\
    & \leq \fint_{Q^+} \fint_{Q^-} \Big(f(z_+) - \langle
    f\varphi\rangle_{Q^0}\Big)_+\dd z_{-} \dd z_+ \\
    &\quad + \fint_{Q^+} \fint_{Q^-} \Big( \langle
    f\varphi\rangle_{Q^0} - f(z_{-})\Big)_+ \dd z_{-}\dd z_+ \\
    & \leq \fint_{Q^+} \Bigg\{\underbrace{\fint_{Q^0} \big(f(z_+)
      - f(z_0)\big)\varphi(x_0) \dd z_0}_{=: \cI^+}
    \Bigg\}_+ \dd z_+ \\
    & \quad + \fint_{Q^-} \Bigg\{\underbrace{\fint_{Q^0}
      \big(f(z_0) - f(z_{-})\big)\varphi(x_0) \dd z_0}_{=:
      \cI^-}\Bigg\}_+ \dd z_{-}.
  \end{aligned}
\end{equation}
where $z_+ = (x_+, t_+), z_- = (x_-, t_-)$ and
$z_0 = (x_0, t_0)$.  Note that $t_{-} < t_0 < t_+$. We now use
the trajectories constructed in Section \ref{sec:trajectories} to
estimate the right hand side.

Using the chain rule, \eqref{eq:hormander},
\eqref{eq:control-general} with the choice \eqref{eq:choice-g} we
get
\begin{equation}
  \label{eq:I1-I2}
  \begin{aligned}
    \cI^\pm & = \pm \fint_{Q^0} \Big(f(z_\pm) -
    f(z_0)\Big) \, \varphi(x_0) \dd z_0 \\
    & = \mp \fint_{Q^0} \int_0^1 \frac{\dd}{\dd
      s}f\big(\Gamma_\pm(s)\big) \, \varphi(x_0)\dd s\dd z_0 \\
    & = \mp\delta_\pm  \fint_{Q^0} \int_0^1 \left( \mathcal T
    f \right)\big(\Gamma_\pm(s)\big) \, \varphi(x_0)\dd s\dd z_0 \\
    & \qquad \mp \fint_{Q^0} \int_0^1 \left( \ds X^{(0)}(s) \right)
    \cdot \left(\nabla_{(0)} f\right)(\Gamma_\pm(s)) \, \varphi(x_0)\dd s \dd
    z_0 \\
    & \leq  \underbrace{\pm\delta_\pm \fint_{Q^0} \int_0^1
      \left[ \left( \mathcal A^* \mathcal A \right) f\right]
      \big(\Gamma_\pm(s)\big) \, \varphi(x_0)\dd s\dd z_0}_{=:
      \cI^\pm_1} \\
    & \qquad  \underbrace{\mp\fint_{Q^0} \int_0^1 \sum_{i =
        0}^\kappa s^{\alpha_i} \mathsf m_\pm^{(i)}
      \cdot \left( \nabla_{(0)} f \right)(\Gamma_\pm(s)) \,
      \varphi(x_0)\dd s \dd z_0}_{=: \cI^\pm_2}.
  \end{aligned}
\end{equation}
where the above integrals are interpreted in the duality sense. The only difference between the two terms $\cI^+$ and
$\cI^-$ is the role of $z_0$: in the former case it is the past
variable, in the latter it is the future variable.

In the following computations to estimate $\mathcal I_1^\pm$ and $\mathcal I_2^\pm$,
the local and the non-local case can be treated similarly,
\textit{upon replacing the integration domain over $Q^0$
by $Q^0_v \times \R^{d_0}$ in the non-local case,
where $Q^0_v \subset \R^{N + 1 - d_0}$ is the domain of the cylinder $Q^0$
for the variables $(x^{(1)}, \dots, x^{(\kappa)}, t)$,
 that is we slice out the $x^{(0)}$ variable.}

We now integrate by parts the terms $\cI^\pm _1$ after a
change of variables $x_0 \mapsto y:= \Phi^s_\pm(x_0)$ for
$s,t_0$ fixed $\Phi^s_\pm$ is the affine map defined in \eqref{eq:mapping}, which is invertible for $s
\not =0$): 
\begin{equation*}
  \begin{aligned}
    \cI_1^\pm & = \pm\frac{\delta_\pm}{|Q^0|} \int_{Q^0} \int_0^1
      \left[ \left( \mathcal A^* \mathcal A \right) f\right]
      \big(\Gamma_\pm(s)\big) \, \varphi(x_0)\dd s\dd z_0 \\
      & = \pm\frac{\delta_\pm}{|Q^0|}\int_{Q^0} \int_0^1
      \left[ \left( \mathcal A^* \mathcal A \right) f\right]
      \left( \Phi^s_\pm (x_0),st_0 + (1-s)t_\pm
      \right) \, \varphi(x_0)\dd s\dd x_0 \dd t_0 \\
      & = \pm \frac{\delta_\pm}{|Q^0|} \int_{(\Phi^s_\pm \otimes \I)(Q^0)} \int_0^1
      \left[ \left( \mathcal A^* \mathcal A \right) f\right]
      \left( y,st_0 + (1-s)t_\pm \right) \, \varphi
      \left( \left(\Phi^s_\pm\right)^{-1}(y)\right) \frac{\dd s
        \dd y \dd t_0}{\left| \det \mathfrak
        A^s \right|} \\
      & = \pm \frac{\delta_\pm}{|Q^0|} \int_{(\Phi^s_\pm \otimes
        \I)(Q^0)} \int_0^1 \left[ \mathcal A f\right]
      \left( y,st_0 + (1-s)t_\pm \right) \, \cA\left[\varphi
      \left(\left( \Phi^s_\pm \right)^{-1} (y)\right)\right]
    \frac{\dd s \dd y \dd t_0}{\left| \det \mathfrak A^s \right|} \\
      & = \pm \frac{\delta_\pm}{|Q^0|}  \int_{Q^0} \int_0^1
      \left[ \mathcal A f\right]
      \left( \Gamma_\pm(s) \right) \, \left\{ \cA\left[\varphi
      \left(\left( \Phi^s_\pm \right)^{-1} (y)\right)\right]
  \right\}_{|y=\Phi^s_\pm(x_0)} \dd s \dd x_0 \dd t_0.
  \end{aligned}
\end{equation*}

We then use the first bound in \eqref{eq:hypA} on the operator $\mathcal A$ in
assumption {\bf (H)}: for $\epsilon \in (0, 1-\beta)$ 
\begin{align*}
  \left| \left\{ \cA\left[\varphi
  \left(\left( \Phi^s_\pm \right)^{-1} (y)\right)\right]
  \right\}_{|y=(\Phi^s_\pm)^{-1}(x_0)} \right|
  & \lesssim_\varphi
    \left\| \nabla_{(0)} \left( \Phi^s_\pm \right)^{-1}
    \right\|_\infty ^{\beta +\epsilon}\\
  & \lesssim_\varphi \left( \sum_{i=0} ^\kappa s^{-1-\alpha_i}
    \right)^{\beta +\epsilon} \lesssim_\varphi \left( \sum_{i=0} ^\kappa
    s^{-(\beta+\epsilon)(1+\alpha_i)} \right)
\end{align*}
and deduce finally
\begin{equation*}
  \cI_1^\pm \lesssim \frac{\delta_\pm}{|Q^0|} \int_{Q^0} \int_0^1
  \left| \left[ \mathcal A f\right]
    \left( \Gamma_\pm(s) \right) \right|  \left( \sum_{i=0}
    ^\kappa s^{-(\beta+\epsilon)(1+\alpha_i)} \right) \dd s \dd x_0 \dd
  t_0.
\end{equation*}

Now let us turn to $\cI_2^\pm$ in \eqref{eq:I1-I2}. We
find, using the same change of variables,
\begin{equation*}
  \begin{aligned}
    \cI_2^\pm
    & = \mp\int_0^1  \fint_{Q^0}  \sum_{i = 0}^\kappa
    s^{\alpha_i} \mathsf m_\pm^{(i)} \cdot\nabla_{(0)}
    f(\Gamma_\pm(s)) \varphi(x_0) \dd s \dd x_0 \dd t_0 \\
    & = \mp \frac{1}{\vert Q^0\vert} \sum_{i = 0}^\kappa
    \int_0^1  \int_{(\Phi^s_\pm \otimes \I)(Q^0)}  s^{\alpha_i}
    \mathsf m_\pm^{(i)}  \cdot\nabla_{(0)}
    f \left( y,st_0 + (1-s)t_\pm \right) \\
    & \hspace{8cm} \times \varphi
    \left(\left( \Phi^s_\pm \right)^{-1} (y)\right)
    \frac{\dd s \dd y \dd t_0}{\left| \det \mathfrak A^s \right|}\\
      & =\mp \frac{1}{\vert Q^0\vert} \sum_{i = 0}^\kappa
      \int_0^1 \int_{(\Phi^s_\pm \otimes \I)(Q^0)} s^{\alpha_i}\mathsf m_\pm^{(i)}
       \cdot \nabla_{(0)} (-\Delta_{(0)})^{-\frac{1-\beta}{2}}
       f\left( y,st_0 + (1-s)t_\pm \right) \\
      & \hspace{6cm} \times (-\Delta_{(0)})^{\frac{1-\beta}{2}}
      \left\{\varphi\left[\left( \Phi_\pm^s \right)^{-1}
          (y)\right]\right\}
      \frac{\dd s \dd y \dd t_0}{\left| \det \mathfrak A^s
        \right|}\\
      & =\mp \frac{1}{\vert Q^0\vert }\sum_{i = 0}^\kappa
      \int_0^1   \int_{{Q^0}} s^{\alpha_i}\mathsf m_\pm^{(i)}
       \cdot \left(\nabla_{(0)} (-\Delta_{(0)})^{-\frac{1-\beta}{2}}
       f\right)\left( \Gamma_\pm(s) \right) \\
      & \hspace{3.5cm} \times \left\{ (-\Delta_{(0)})^{\frac{1-\beta}{2}}
      \left\{\varphi\left[ \left( \Phi_\pm^s \right)^{-1} (y)\right]\right\}
    \right\}_{|y=(\Phi^s_\pm)^{-1}(x_0)} \dd s \dd x_0 \dd t_0.
\end{aligned}
\end{equation*}
We then observe that for some $\epsilon \in (0, 1)$ 
\begin{equation*}
  \left| (-\Delta_{(0)})^{\frac{1-\beta}{2}}
    \left\{\varphi\left[ \left( \Phi_\pm^s \right)^{-1}
        (y)\right]\right\} \right| \lesssim
  \left\| \nabla_{(0)} \left( \Phi^s _\pm \right)^{-1}
  \right\|_\infty ^{1-\beta-\epsilon} \lesssim  \left(
    \sum_{j=0} ^\kappa s^{-1-\alpha_j} \right)^{1-\beta-\epsilon},
\end{equation*}
and we deduce
\begin{equation*}
  \cI_2^\pm
  \le \frac{1}{\vert Q^0\vert }\int_0^1 \int_{Q^0} \left( \sum_{i,j=0} ^\kappa s^{ \alpha_i +(\beta+\epsilon)(1+\alpha_j) - \alpha_j - 1} \right) \left|
  \left(\nabla_{(0)} (-\Delta_{(0)})^{-\frac{1-\beta}{2}} f\right)\left(
      \Gamma_\pm(s) \right) \right| \dd s \dd x_0 \dd t_0.
\end{equation*}

It follows from \eqref{eq:poincare-aux} and \eqref{eq:I1-I2} that
we are left with estimating
\begin{equation*}
  \cJ := \fint_{Q^\pm} \big\{\cI^\pm_1\big\}_+ \dd
  z_\pm + \fint_{Q^\pm}  \big\{\cI^\pm_2\big\}_+ \dd
  z_\pm.
\end{equation*}
The previous estimates imply
\begin{equation}
  \label{eq:J-gen}
  \begin{aligned}
    \cJ & \lesssim \int_{Q^\pm} \int_0^1 \int_{Q^0}
    \left( \sum_{i = 0}^\kappa \frac{1}{ s^{(\alpha_i+1)(\beta+\epsilon)}}
    \right) \left| \left( \cA f \right) (\Gamma_\pm(s))\right|
    \dd z_\pm \dd s
    \dd z_0 \\
    & +  \int_{Q^\pm} \int_0^1\int_{Q^0}  \left( \sum_{i,j =
        0}^\kappa s^{\alpha_i +(\beta+\epsilon)(1+\alpha_j) - \alpha_j -1} \right) \left| \left(\nabla_{(0)}
        (-\Delta_{(0)})^{-\frac{1-\beta}{2}} f\right)\left(
        \Gamma_\pm(s) \right) \right| \dd z_\pm \dd s \dd z_0.
  \end{aligned}
\end{equation}
In order to balance optimally the singularities we then choose
the $\alpha_i$'s all close to $(2+ \beta+\epsilon)^{-1} -1 \in (-1,0)$
and pairwise distinct. 


\begin{remark}
\label{rmk_gain_int}
Note also that, by adopting the mollified
formulation~\eqref{eq:moll-Poincare} and by keeping the test
functions $\psi_\pm$ until the integral $\cJ$ above, one could
use the variable $z_0$ to integrate the test function in the
non-singular region $s \in [s_0,1]$, keep an $L ^\infty$ control
of the test function in the singular region, and optimise
$s_0$ in order to deduce a gain of integrability.
\end{remark}

Then~\eqref{eq:J-gen} simplifies into, for
some $\var$ as small as wanted,
\begin{align*}
  \cJ &\lesssim \int_{Q^\pm} \int_0^1\int_{Q^0}
    s^{-\frac{\beta}{2+\beta}-\var} 
    	\left| \left( \cA f \right) (\Gamma_\pm(s))\right|  \dd z_\pm \dd s \dd z_0 \\
    &+ \int_{Q^\pm} \int_0^1\int_{Q^0} s^{-1 + \frac{\beta}{2+\beta}-\var}  \left|\left(\nabla_{(0)}(-\Delta_{(0)})^{-\frac{1-\beta}{2}}
        f\right)\left(\Gamma_\pm(s) \right) \right|  \dd z_\pm \dd s \dd z_0.
\end{align*}
We are now in a position to use not only the intermediate
variable $z_0$, but also the future/past variables $z_\pm$ for a
change of variables, since we are now integrating in both $Q^0$
and $Q_\pm$. Note that it was not possible to use the integration
in $Q_\pm$ before because of the positive value around the $Q^0$
integral.

We split $\cJ$ as follows, given $s_0 \in (0,1)$,
\begin{equation*}
  \begin{aligned}
     \int_{Q^\pm} \int_{Q^0} &\int_0^1
     \Big( \cdots \Big) \dd s \dd z_0 \dd z_\pm
    \\
    & = \underbrace{\int_{Q^\pm} \int_{Q^0} \int_0^{s_0} \Big( \cdots \Big)  \dd s \dd z_0 \dd
      z_\pm}_{=: \cJ^\pm_1} + \underbrace{\int_{Q^\pm}
      \int_{Q^0} \int_{s_0}^1 \Big(
      \cdots \Big) \dd s \dd z_0 \dd z_\pm}_{=: \cJ^\pm_2}.
  \end{aligned}
\end{equation*}
The two changes of variables on each part are represented in
Figure~\ref{fig:change-of-var}.

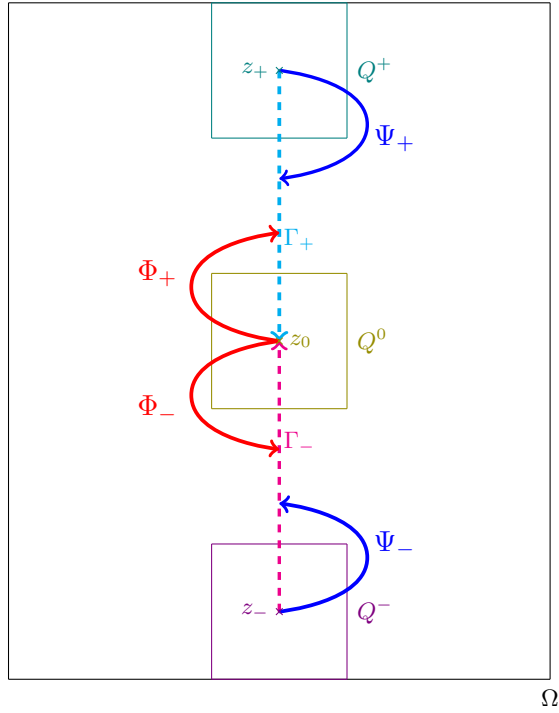
\begin{figure}{
    \centering
    \begin{tikzpicture}[scale =0.9]
      \draw [black] (-4,0) -- (4,0);
      \draw[black] (-4,-10) -- (4,-10);
      \draw [black](-4,0) -- (-4,-10);
      \draw [black](4,0) -- (4,-10) node[anchor=north, scale=1]
      {\footnotesize{$\tilde \Omega$}};
      \draw [teal](-1,0) -- (1, 0);
      \draw [teal](-1,-2) -- (1, -2);
      \draw [teal](-1,0) -- (-1, -2);
      \draw [teal](1,0) -- node[anchor=west, scale=1]
      {\footnotesize{$Q^+$}}(1, -2);
      \draw[olive] (-1,-4) -- (1, -4);
      \draw [olive](-1,-6) -- (1, -6);
      \draw [olive](-1,-4) -- (-1, -6);
      \draw [olive](1,-4) -- node[anchor=west, scale=1]
      {\footnotesize{$Q^0$}}(1, -6);
      \draw[violet] (-1,-8) -- (1, -8);
      \draw [violet](-1,-10) -- (1, -10);
      \draw [violet](-1,-8) -- (-1, -10);
      \draw [violet](1,-8)  -- node[anchor=west, scale=1]
      {\footnotesize{$Q^-$}} (1, -10);
      \draw [->, cyan, dashed, line width=0.5mm]  plot [smooth]
      coordinates {(0,-1)(0,-5)};
      \node [cyan] at (0.3, -3.5) {\footnotesize{$\Gamma_+$}};
      \draw [->, magenta, dashed, line width=0.45mm] plot
      [smooth] coordinates {(0,-9) (0,-5)};
      \node [magenta] at (0.3, -6.5) {\footnotesize{$\Gamma_-$}};
      \draw [->, blue, line width=0.45mm] plot [smooth, tension =
      1.5] coordinates {(0,-1) (1.3, -1.8)(0,-2.6)};
      \node [blue] at (1.7, -2) {$\Psi_+$};
      \draw [->, blue, line width=0.5mm] plot [smooth, tension =
      1.5] coordinates {(0,-9) (1.3, -8.2) (0,-7.4)};
      \node [blue] at (1.7, -8) {$\Psi_-$};
      \draw [->, red, line width=0.5mm] plot [smooth, tension =
      1.5] coordinates {(0,-5) (-1.3, -4.2)(0,-3.4)};
      \node [red] at (-1.8, -4) {$\Phi_+$};
      \draw [->, red, line width=0.5mm] plot [smooth, tension =
      1.5] coordinates {(0,-5) (-1.3, -5.8) (0,-6.6)};
      \node [red] at (-1.8, -6) {$\Phi_-$};
      \coordinate[label={[teal]left:\footnotesize{$z_+$}}] (z_+) at (0, -1);
      \draw [teal] plot[only marks,mark=x,mark size=2pt]  coordinates {(0, -1)};
      \coordinate[label={[olive]right:\footnotesize{$z_0$}}] (z_0) at (0, -5);
      \draw [olive] plot[only marks,mark=x,mark size=2pt]
      coordinates {(0, -5)};
      \coordinate[label={[violet]left:\footnotesize{$z_-$}}] (z_-) at (0, -9);
      \draw [violet] plot[only marks,mark=x,mark size=2pt]
      coordinates {(0, -9)};
    \end{tikzpicture}
  }
  \caption{The change of variables that we use in the proof for
    some fixed $s \in (0, 1)$. For $s \in (0,s_0)$ we use
    $\Psi_\pm$ that map $z_\pm$ onto $ \Gamma_\pm$, whereas
    for $s \in (s_0,1)$ we use $\Phi_\pm$ that map $z_0$ onto
    $\Gamma_\pm$.}\label{fig:change-of-var}
\end{figure}

To control the part $\cJ^\pm_1$ we parametrise
$\Gamma_\pm(s)$ by the $z_\pm$ coordinates, for $s \in [0,s_0]$
and $t_0, t_\pm$ all fixed:
\begin{equation*}
  (x_\pm,t_\pm) \to \Gamma_\pm (s) = (\bar \Gamma^s_\pm,t_0 s +
  (1-s) t_\pm) =: (\Psi^s_\pm(x_\pm),t_0 s +(1-s) t_\pm)
\end{equation*}
Since $s$ does not approach $1$, we can prove that this change of
variables is not singular. The mappings $\Psi_\pm^s$ are
determined by solving~\eqref{eq:mapping} for $x_\pm$ instead of
$x_0$. It yields
$\Psi_\pm^s x_\pm= \mathfrak a_\pm^s x_\pm + \mathfrak b_\pm^s$
with the matrix $\mathfrak a_\pm^s$ and vector $\mathfrak b_\pm^s$ given
by
\begin{equation*}
  \mathfrak a^s _\pm :=  \left( \sT_{\pm} (s) - \sW_\pm ^\delta (s) \big[\sW_\pm ^\delta
    (1)\big]^{-1} \sT_{\pm} (1) \right)  
    \quad
  \mathfrak b^s _\pm := \sW_\pm ^\delta (s) \big[\sW_\pm ^\delta
    (1)\big]^{-1} x_0,
\end{equation*}
which depend only on $s$ and $x_0$. Since we have proved in
Lemma~\ref{lem:wronskian} that
$\sW(s) = \cO\left(s^{d_0
  \frac{(\kappa+1)(\kappa+2)}{2}+d_0\sum_{i=0}^\kappa \alpha_i}\right)$
it follows that
$\sW^\delta_\pm(s) = \cO\left(s^{d_0
  \frac{(\kappa+1)(\kappa+2)}{2}+d_0\sum_{i=0}^\kappa \alpha_i}\right)$
and thus $\sW^\delta_\pm(s)$ goes to zero as $s \to 0$. Since
$\sT^\delta_\pm$ is a lower triangular matrix with unitary
diagonal it is invertible for all $s$, and finally
$\mathfrak a^s_\pm$ is invertible with uniformly bounded inverse
on $s \in [0,s_0]$ for $s_0>0$ small enough. We apply this change
of variables to estimate $\cJ_1^\pm$:
\begin{align*}
  \cJ^\pm _1
&  = \int_{Q^0} \int_0 ^{s_0}  s^{-\frac{\beta}{2+\beta}-\var} 
  \left| \left(\cA f\right)\left(
      y, st_0 +(1-s)t_\pm\right) \right| \frac{\dd s \dd y \dd
    t_0}{\left| \det \mathfrak a^s \right|}\\
 & +   \int_{Q^0} \int_0 ^{s_0}  s^{-1 + \frac{\beta}{2+\beta}-\var}
  \left| \left(\nabla_{(0)}
    (-\Delta_{(0)})^{-\frac{1-\beta}{2}} f\right)\left(
      y, st_0 +(1-s)t_\pm\right) \right| \frac{\dd s \dd y \dd
    t_0}{\left| \det \mathfrak a^s \right|}\\
  & \lesssim_{s_0} \int_{\tilde\Omega}  \left|
    \cA f(z) \right| \dd z.
\end{align*}

The part $\cJ^\pm_2$ is controlled by using the same
change of variables $\Phi^s_\pm$ as before:
\begin{align*}
  \cJ^\pm_2
  & = \int_{Q^0} \int_{s_0}^1 s^{-1 + \frac{\beta}{2+\beta}-\var}
    \left| \left(\nabla_{(0)}
    (-\Delta_{(0)})^{-\frac{1-\beta}{2}} f\right)\left(
    y, st_0 +(1-s)t_\pm\right) \right| \frac{\dd s \dd y \dd
    t_0}{\left| \det \mathfrak A^s \right|} \\
    & + \int_{Q^0} \int_{s_0}^1  s^{-\frac{\beta}{2+\beta}-\var} 
    \left| \cA f \left(
    y, st_0 +(1-s)t_\pm\right) \right| \frac{\dd s \dd y \dd
    t_0}{\left| \det \mathfrak A^s \right|} \\
  & \lesssim_{s_0} \int_{\tilde \Omega}  \left|
    (-\Delta_{(0)})^{\frac{\beta}{2}} f(z) \right| \dd z + \int_{\tilde \Omega}  \left| \cA f(z) \right| \dd z \lesssim
    \int_{\tilde \Omega}  \left| \cA f(z) \right| \dd z,
\end{align*}
where we have used first that the integral in $s$ avoids the
singularity at $0$, and second assumption~\eqref{eq:hypA} in
{\bf (H)}. This concludes the proof.

\section{Proof of the weak Harnack inequality}
\label{sec:DGlemmas}

In this section we briefly explain how to adapt the approach
of~\cite{guerand-mouhot} for deducing the intermediate value of
lemma of De Giorgi from the Poincaré inequality we have proved. This
approach was developed in the local case with $\kappa = 1$ and
with a weaker version of the Poincaré inequality. A non-local
extension, when $\kappa = 1$, was developed
in~\cite{loher-2022-preprint-quantitative-de-giorgi}.
This approach can be described as a ``trajectory viewpoint'' on
the De Giorgi theory, and it is fully quantitative. 

In order to carry out this analysis, one needs to have an a priori local quantitative boundedness result
for weak solutions to \eqref{eq:hormander}, also known as the first De
Giorgi lemma, which writes in its simplest form as follows.
\begin{lemma}[First Lemma of De Giorgi]
  Let the operators $\cA, \cB$ be such that $\cB$ is given by
  {\bf (H)} and $\cA$ is more specifically given by
  \begin{itemize}
  \item[(i)] $\cA:=\sqrt{A} \nabla_{(0)}$ with
    $A \in [\Lambda^{-1},\Lambda]$ measurable symmetric matrix (local
    case),
  \item[(ii)] $\cA = \sqrt{a} (-\Delta_v)^{\beta/2}$ with
    $a \in [\Lambda^{-1},\Lambda]$ scalar measurable (non-local
    case).
  \end{itemize}
  Let $f$ be a sub-solution to~\eqref{eq:hormander} in $Q_1$, such that in the local case
  $0 \le f$ in $Q_1$, and such that in the non-local case, $0 \le f \le 1$ in 
  $B_1^{\otimes \kappa}\times \R^d \times [-1, 0]$. Then there exist $\var>0$ and
  $r \in (0,1)$ so that $\int_{Q_1} f^2 \le \var$ implies
  $f \le 1/2$ on $Q_r$.
\end{lemma}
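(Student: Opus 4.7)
\medskip

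The plan is to run a classical De~Giorgi iteration on truncations, using the Poincaré inequality of Theorem~\ref{thm:poincare} as the regularity input in place of the usual Sobolev embedding. Fix $r \in (0,1)$ to be chosen at the end, set the increasing levels $L_k := \tfrac{1}{2}(1-2^{-k})$, and choose a nested sequence of admissible triples $(Q^-_k, Q^0_k, Q^+_k)$ of hypoelliptic cylinders inside $Q_1$ (respectively $\R^{d_0}\times B_1^{\otimes \kappa}\times[-1,0]$ in the non-local case), whose future cylinders $Q^+_k$ decrease to $Q_r$. Define the truncations $f_k := (f - L_k)_+$ and the iteration quantity
\begin{equation*}
  U_k := \int_{Q^+_k} f_k \, \dd z.
\end{equation*}
Since $f_k\ge 2^{-k-2}$ on the set where $f_{k+1}>0$, it will suffice to derive a super-linear recurrence $U_{k+1} \le C^k U_k^{1+\sigma}$ for some universal $\sigma>0$, which by the standard De~Giorgi iteration lemma forces $U_k\to 0$ as soon as $U_0 \le \varepsilon$ is small, hence $f_\infty = (f-\tfrac12)_+ \equiv 0$ on $Q_r$.

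The first ingredient is a Caccioppoli-type energy estimate: using that $f$ is a sub-solution and testing against $\chi_k^2 f_k$ for a smooth space-time cutoff $\chi_k$ between $Q^+_k$ and a slightly enlarged cylinder, the lower bound of~\eqref{eq:hypA} and the coercivity $a,A\ge \Lambda^{-1}$ yield
\begin{equation*}
  \int \chi_k^2 |\cA f_k|^2 \, \dd z
  \lesssim C^k \int_{Q^+_{k-1}} f_k^2 \, \dd z
  + \text{(non-local tail)},
\end{equation*}
where in the fractional case the symmetry~\eqref{eq:symmetry-nonloc}, the kernel bound~\eqref{eq:elliptic-nonloc} and the global bound $f\le 1$ are used to control the long-range interaction by a constant times the local $L^1$-norm of $f_k$. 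The second ingredient is the Poincaré inequality of Theorem~\ref{thm:poincare}, rescaled and translated to apply on $(Q^-_k, Q^0_k, Q^+_k)$: combined with Chebyshev's inequality on the past average $\langle f_k\rangle_{Q^-_k}\le U_k/|Q^-_k|$ and the bound $L_{k+1}-L_k = 2^{-k-2}$, this gives
\begin{equation*}
  U_{k+1} \lesssim
  \int_{Q^+_k} \bigl(f_k - \langle f_k\rangle_{Q^-_k}\bigr)_+ \dd z
  + |\{f_{k+1}>0\}\cap Q^+_k|\cdot \langle f_k\rangle_{Q^-_k}
  \lesssim \int_{\tilde\Omega_k} |\cA f_k|\, \dd z + 2^k U_k^{2}.
\end{equation*}
Interpolating with Hölder between the $L^1$-Poincaré output and the $L^2$-Caccioppoli, and using that $|\{f_k>0\}\cap Q^+_k| \lesssim 2^{2k} \int f_{k-1}^2 \lesssim 2^{2k} U_{k-1}$ by Chebyshev again, produces the desired geometric super-linear recurrence $U_{k+1} \le C^k U_k^{1+\sigma}$.

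The main obstacle is verifying that the rescaled triple of cylinders at each step of the iteration remains admissible for the Poincaré inequality: Theorem~\ref{thm:poincare} is stated with a fixed large ambient radius $R$, and one has to choose the nested cylinders $(Q^-_k,Q^0_k,Q^+_k)$ so that, after the intrinsic hypoelliptic rescaling that brings them back to the reference configuration of Figure~\ref{fig:main-result}, the ambient domain $B_R^{\otimes(\kappa+1)}\times(-5,0)$ remains inside the set where $f$ is a sub-solution. The sharpest implementation, as in~\cite{guerand-mouhot, loher-2022-preprint-quantitative-de-giorgi}, is to prescribe cylinders whose sizes shrink geometrically towards $Q_r$ while their centres drift by a bounded hyper-Galilean displacement, so that all occurrences live inside $Q_1$ (resp.\ $\R^{d_0}\times B_1^{\otimes\kappa}\times[-1,0]$). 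Once this geometric bookkeeping is set and the non-local tail absorption is carried out as above, the iteration closes and yields a universal $\varepsilon>0$ and $r\in(0,1)$, completing the proof.
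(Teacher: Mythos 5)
There is a genuine gap: the plain $L^1$ Poincaré inequality of Theorem~\ref{thm:poincare} cannot serve as the substitute for the gain of integrability, and without that gain your recurrence is not super-linear. Track the exponents in your own chain: with $U_k:=\int_{Q^+_k}f_k$, the Poincaré output is $\int|\cA f_k|$, which by Cauchy--Schwarz and the Caccioppoli estimate is at best $|\{f_k>0\}|^{1/2}\bigl(\int\chi^2|\cA f_k|^2\bigr)^{1/2}\lesssim (2^kU_{k-1})^{1/2}\,\bigl(C^k\int f_k^2\bigr)^{1/2}$; even in the non-local case, where $f\le 1$ lets you bound $\int f_k^2$ by $U_{k-1}$, this gives $C^kU_{k-1}$ -- a \emph{linear} recursion with constants growing like $C^k$, to which the De Giorgi iteration lemma does not apply. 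In the local case the situation is worse, since no upper bound on $f$ is assumed and $\int f_k^2$ is not controlled by the $L^1$ quantity $U_{k-1}$ at all, so your measure estimates conflate $L^1$ and $L^2$ quantities without justification. Super-linearity requires an embedding of the type $\|f_k\|_{L^{2q}}\lesssim$ energy with $q>1$; an $L^1\to L^1$ Poincaré does not provide it (the paper's Remark~\ref{rmk_gain_int} observes that a gain of integrability \emph{can} be extracted from the proof of the Poincaré inequality, but that is an additional argument you neither state nor carry out). A secondary but real flaw is the step $\langle f_k\rangle_{Q^-_k}\le U_k/|Q^-_k|$: the past cylinder is disjoint from the future cylinder (the Poincaré inequality needs a time gap), so this bound is unjustified, and controlling the past averages inside the iteration would force a cascade of ever-earlier past cylinders inside the bounded time slab.

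Note also that this is not the route the paper takes: the first De Giorgi lemma is proved there by the classical combination of an energy estimate with the gain of integrability of the fundamental solution of the (fractional) Kolmogorov equation, as in \cite{guerand-mouhot,loher-2022-preprint-quantitative-de-giorgi,AR-harnack,PP,IS-weak,APP}, the non-local case with several commutators being handled by scaling or by computing the fundamental solution in Fourier variables; the Poincaré inequality of Theorem~\ref{thm:poincare} enters only in the second De Giorgi lemma (the intermediate value lemma). To repair your proof, either invoke such a fundamental-solution (or kinetic Sobolev) gain of integrability in place of Theorem~\ref{thm:poincare}, or genuinely upgrade the Poincaré inequality to an $L^p$-gain version before running the iteration.
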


In the local case, a stronger statement holds where the
$L^\infty$ norm of $f$ on $Q_r$ is controlled by a constant
(depending on $\cA$, $\cB$ and $r$) times the $L^2$ norm of $f$
on $Q_1$. In the non-local case, variants of this stronger
statement exist if $f$ is not only a sub-solution but also a
super-solution; we refer to~\cite{Loher-2023, Loher-2024}. The proof is based
on an energy estimate and the gain of integrability of the
fundamental solution of the Kolmogorov equation, local or
non-local. This is classical and we refer
to~\cite{guerand-mouhot,loher-2022-preprint-quantitative-de-giorgi,AR-harnack, PP, IS-weak, APP}. The
non-local case with $\kappa \geq 2$ is obtained
by scaling arguments, or by calculating the fundamental
solutions in Fourier. 

As a next step, one needs to prove the second De Giorgi Lemma, also known as the intermediate value
Theorem, which controls the gradient of a sub-solution by  getting an explicit bound on the measures of the sets where the solution is in between two values. 
It reads as follows:
\begin{lemma}[Second Lemma of De Giorgi]
  Let the operators $\cA, \cB$ be such that $\cB$ is given by
  {\bf (H)} and $\cA$ is more specifically given by
  \begin{itemize}
  \item[(i)] $\cA:=\sqrt{A} \nabla_{(0)}$ with
    $A \in [\Lambda^{-1},\Lambda]$ measurable symmetric matrix (local
    case),
  \item[(ii)] $\cA = \sqrt{a} (-\Delta_v)^{\beta/2}$ with
    $a \in [\Lambda^{-1},\Lambda]$ scalar measurable (non-local
    case).
  \end{itemize}
  Let $0 \leq f \leq 1$ be a sub-solution to~\eqref{eq:hormander} in $Q_1$. Let $\delta_1, \delta_2 \in (0, 1)$. Then there are
  constants $r_0 > 0$ and $\nu \in (0, 1)$ and
  $\theta \in (0, 1)$ such that whenever
  \begin{equation*}
    \label{eq:conditions-ivl}
    \vert \{f \leq 0\} \cap Q_{r_0}^-\vert \geq \delta_1 \vert
    Q_{r_0}^-\vert \qquad \textrm{ and } \qquad \vert \{f \geq
    1-\theta\} \cap Q_{r_0}\vert \geq \delta_2 \vert Q_{r_0}\vert,
  \end{equation*}
  there holds
  \begin{equation*}
    \big\vert \{0 < f < 1-\theta\} \cap Q_{1/2}\big\vert \geq \nu
    \vert  Q_{1/2}\vert
  \end{equation*}
  in the local case, or
  \begin{equation*}
    \big\vert \{0 < f < 1-\theta\} \cap
    B_{(\frac12)^{1+\kappa\beta}} \times \cdots \times
    B_{(\frac12)^{1+\beta}} \times [-3,0] \big\vert \geq \nu
    \vert  Q_{1/2}\vert
  \end{equation*}
  in the non-local case.
\end{lemma}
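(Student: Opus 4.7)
The plan is to apply the Poincaré inequality of Theorem \ref{thm:poincare} to a carefully chosen non-negative truncation of $f$, lower-bound the resulting left-hand side from the two density hypotheses, and upper-bound the right-hand side using Cauchy--Schwarz together with a Caccioppoli-type energy estimate. The heuristic is that a sub-solution cannot transition from small values to near-maximal values on two sets of positive density without paying a diffusive cost in the intermediate regime.

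First I would introduce the two truncations $u := (f - \tfrac{\theta}{2})_+$ and $v := (f - (1-\tfrac{\theta}{2}))_+$. Both are non-negative weak sub-solutions of \eqref{eq:hormander}: this follows from the chain rule for the composition with a $C^2$ convex non-decreasing function (as in Definition \ref{def_weak_sol}, after a standard smoothing of $(\cdot)_+$), and in the non-local case by a pointwise inequality inside the Gagliardo-type integrand. The ``clipped'' function $g := u - v$ is supported in $\{f > \theta/2\}$, equals $f - \tfrac{\theta}{2}$ on the intermediate range $\{\theta/2 < f < 1 - \theta/2\}$, and saturates at $1 - \tfrac{3\theta}{2}$ above; the combination of the Poincaré inequalities applied to $u$ and $v$ will effectively extract the contribution of the intermediate set.

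I would then apply the rescaled Poincaré inequality to $u$:
$$\int_{Q_{r_0}} (u - \langle u \rangle_{Q_{r_0}^-})_+ \dd z \lesssim \int_{\tilde\Omega_{r_0} \cap \{f > \theta/2\}} |\cA f| \dd z.$$
The left-hand side is bounded below by a constant multiple of $\delta_1 \delta_2 |Q_{r_0}|$: since $u \equiv 0$ on $\{f \le 0\}$, the first density hypothesis yields $\langle u \rangle_{Q_{r_0}^-} \le 1 - \delta_1$, while $u \ge 1 - \tfrac{3\theta}{2}$ on the set $\{f \ge 1-\theta\} \cap Q_{r_0}$ of density $\delta_2$, so choosing $\theta$ small compared to $\delta_1$ forces the integrand to be uniformly positive on a set of measure $\delta_2 |Q_{r_0}|$. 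For the right-hand side, I would split $\{f > \theta/2\} = \{\theta/2 < f < 1-\theta\} \cup \{f \ge 1-\theta\}$. Cauchy--Schwarz together with the Caccioppoli bound $\|\cA f\|_{L^2(\tilde\Omega_{r_0})} \lesssim 1$ (obtained by testing the sub-solution inequality against $f\psi^2$ for a suitable cutoff $\psi$, which is standard and available in the references cited in the paper) controls the first summand by $|\{0 < f < 1-\theta\} \cap \tilde\Omega_{r_0}|^{1/2}$ times a universal constant; for the second summand, an analogous application of Poincaré and Cauchy--Schwarz to $v$ bounds it by $C\theta \cdot |\{f \ge 1-\theta\}|^{1/2}$, which is absorbed by the lower bound for $\theta$ small. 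Combining yields $|\{0 < f < 1-\theta\} \cap \tilde\Omega_{r_0}|^{1/2} \gtrsim \delta_1 \delta_2$, and the conclusion follows by geometric inclusion (up to tuning $\nu$ and $r_0$).

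The main technical obstacle is the clean isolation of the intermediate level set on the right-hand side: the naive single truncation $\phi(f) = f_+$ picks up all of $\{f > 0\}$, including the upper part $\{f \ge 1 - \theta\}$ that is a priori large by hypothesis. The two-level truncation above circumvents this by exploiting the smallness of $\theta$ to absorb the extra contribution. A further subtlety appears in the non-local case, where the chain rule $\cA \phi(f) = \phi'(f) \cA f$ fails and one must rework the sub-solution property for truncations directly in terms of the Gagliardo-type double integral, using pointwise inequalities of the form $|\phi(f(v)) - \phi(f(w))| \le \|\phi'\|_\infty |f(v) - f(w)|$; the tails from $\R^d \setminus B_1$ must moreover be controlled via the global $L^\infty$-bound on $f$ hypothesized in the non-local statement, which also explains the different shape of the conclusion in the non-local case (a slab rather than a cylinder).
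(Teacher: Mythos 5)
Your strategy is the same one the paper itself relies on (it does not reprove the lemma but invokes the arguments of \cite[Subsection~3.2]{guerand-mouhot} and \cite[Subsection~5.2]{loher-2022-preprint-quantitative-de-giorgi}, which are exactly of this type): apply the Poincaré inequality of Theorem~\ref{thm:poincare} to a truncated non-negative sub-solution, bound the left-hand side below by $\delta_1\delta_2$ using the two density hypotheses, and bound the right-hand side by Cauchy--Schwarz plus a Caccioppoli estimate, absorbing the contribution of the top level set by a second truncation whose $L^2$-energy is $O(\theta)$. In the local case your sketch is essentially correct, up to one concrete slip: to control $\int_{\{f\ge 1-\theta\}}|\cA f|$ you use $v=(f-(1-\tfrac{\theta}{2}))_+$, but $\nabla_{(0)}v$ vanishes on $\{1-\theta\le f<1-\tfrac{\theta}{2}\}$, so $|\cA f|$ on that slice is not controlled by $\cA v$; the truncation level must sit \emph{below} $1-\theta$ (e.g.\ $(f-(1-2\theta))_+$, which is bounded by $2\theta$ and whose gradient agrees a.e.\ with $\nabla_{(0)}f$ on the whole set $\{f\ge 1-\theta\}$). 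This is easily repaired. Note also that the paper additionally invokes the First De Giorgi Lemma in this step, while your direct argument does not appear to need it in the local case; that is not by itself a defect.

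The genuine gap is in the non-local case. Your entire treatment of the right-hand side rests on the identity $\cA\big[(f-c)_+\big]=\chi_{\{f>c\}}\,\cA f$, which is what allows you to split the integral over the level sets $\{\theta/2<f<1-\theta\}$ and $\{f\ge 1-\theta\}$ and to gain the factor $|\{0<f<1-\theta\}|^{1/2}$ by Cauchy--Schwarz. For $\cA=\sqrt{a}(-\Delta_v)^{\beta/2}$ this identity fails: $(-\Delta_v)^{\beta/2}(f-c)_+$ is neither supported in $\{f>c\}$ nor equal there to $(-\Delta_v)^{\beta/2}f$, so the bound $\int|\cA u|\lesssim |\{0<f<1-\theta\}|^{1/2}+\theta$ does not follow from your argument. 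The pointwise inequality $|\phi(f(v))-\phi(f(w))|\le\|\phi'\|_\infty|f(v)-f(w)|$ that you mention yields the sub-solution property of the truncations, but not this quantitative localisation to the intermediate level set, which is the heart of the matter; handling it requires the more careful decomposition of the Gagliardo-type double integral (including the tail contributions controlled by the global bound $0\le f\le1$) carried out in \cite[Subsection~5.2]{loher-2022-preprint-quantitative-de-giorgi}. As written, your non-local case is therefore a plan rather than a proof, whereas the paper's citation covers precisely this missing step.
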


The proof of this lemma follows from our Poincaré inequality
and the first lemma of De Giorgi by using the argument
of~\cite[Subsection~3.2]{guerand-mouhot} in the local case (in
fact the proof is simpler since we do not have the error terms in
the Poincaré inequality that were present
in~\cite{guerand-mouhot}), and it follows
from~\cite[Subsection~5.2]{loher-2022-preprint-quantitative-de-giorgi}
in the non-local case (note that we have removed the
mollifications $F_i$ of the cutoff functions in the statement
of~\cite{loher-2022-preprint-quantitative-de-giorgi} since they
can be used in the proof but removed from the statement by making the border region small
enough). These proofs are done in the case of one commutator but
they extend straightforwardly to the case of several commutators
because they do not depend on the transport part
$\cT = \partial_t + \cB$ of the equation, but only on $\cA$ and
the Poincaré inequality and first lemma of De Giorgi.

With the first and second De Giorgi lemma at hand, one can deduce
the ``measure-to-pointwise estimate'' and the weak Harnack
inequality by following~\cite{guerand-mouhot} in the local case,
and~\cite{loher-2022-preprint-quantitative-de-giorgi} in the
non-local case, see \cite{APP} also. These proofs are independent of the equation and
only depend on the previously established functional
inequalities. The only minor change is that the kinetic cylinders
have a slightly more complicated scaling on the variables
$(x^{(\kappa)},\dots,x^{(0)},t)$ but the technical changes needed
are, although slightly tedious, clear.

\end{document}